\documentclass[reqno,12pt]{amsart}
\usepackage{DocumentPreamble}

\AddTitle{Exact cospectrality probabilities for uniform random matrices}
\AddAuthor{Alexander Van Werde}
\email{\href{mailto:a.van.werde@uni-muenster.de}{a.van.werde@uni-muenster.de}}
\address{University of Münster, Germany}

\AddMSC{15B52} 
\AddMSC{60B20} 
\AddMSC{15A21} 
\AddMSC{15B36}   
\AddMSC{11C20}  
\AddMSC{11D45}  

\AddKeyword{Cospectral}
\AddKeyword{rational orthogonal matrix}
\AddKeyword{algebraic number field}
\AddKeyword{random integral matrix}
\AddKeyword{Smith normal form}
\AddKeyword{Diophantine counting}

\begin{document}
\begin{abstract}  
    We study the conjugation action of orthogonal matrices on symmetric random matrices.
    Given a fixed orthogonal matrix over an algebraic number field and a random matrix with entries sufficiently uniform in the ring of integers, we wonder what the probability is that the conjugate is again integral.  
    Our main result establishes an exact formula for this probability in terms of the Smith ideals associated to the orthogonal matrix.  
    
    As an illustrative application, we establish exact formulas for the expected number of rational orthogonal matrices that preserve the integrality of a random matrix for every fixed denominator in dimensions two and three.  
    Notably, the dependence on the denominator turns out to be non-monotone due to number-theoretic fluctuations.
    We also prove bounds on the probability of rational cospectrality with bounded but arbitrarily large denominator. 
\end{abstract}
\maketitle
\section{Introduction}
Kac famously popularized the question ``Can one hear the shape of a drum?'' that asks whether eigenvalues of the Laplacian operator characterize a domain uniquely \cite{kac1966can}.  
While there exist examples where the answer is negative \cite{
gordon1992one,milnor1964eigenvalues}, results by Wolpert imply that the answer is positive in the generic case \cite{wolpert1979length}.

A similar question can be posed for discrete matrices. 
G\"unthard and Primas asked in 1956 whether the adjacency spectrum of a graph characterizes it up to isomorphism \cite{gunthard1956zusammenhang}.
Here too, negative examples can be found \cite{collatz1957}. 
A fundamental conjecture by Haemers \cite{haemers2016almost,van2003graphs} however proposes that the answer should be positive in the generic case: a random graph is believed to be characterized by its adjacency spectrum with high probability. 
A related \emph{fingerprint conjecture} has also been proposed more recently by Vu for random $\pm 1$ matrices \cite{VU2014combinatorial,vu2021recent}. 

The fact that we restrict ourselves to a subset of all matrices is essential for spectral characterization problems.
Indeed, given any symmetric matrix $\bX \in \bbR^{n\times n}$ that is not a multiple of the identity, there is a manifold of other symmetric matrices with the same spectrum given by $\bQ^{\T} \bX \bQ$ with $\bQ$ running over all orthogonal matrices.
The crux in the spectral characterization problem is whether this manifold has non-trivial intersection with the considered set of valid matrices. 
This raises the question: if a symmetric random matrix $\bX$ is constrained to have entries in some given discrete set and $\bQ$ is an orthogonal matrix, what is the probability that $\bQ^{\T}\bX \bQ$ has entries in this same discrete set? 

Significant progress in this direction has recently been made in an asymptotic setting for restricted classes of orthogonal matrices. 
For example, Wang and Zhao \cite{wang2025almost} considered the probability that $\bQ^{\T}\bX\bQ\in \bbZ^{n\times n}$ when $\bQ$ is a rational orthogonal matrix and $\bX$ is the adjacency matrix of a random graph.
They gave an upper bound for fixed $\bQ$ and combined this estimate with a union bound to establish that random graphs are not cospectral through rational matrices with bounded denominator with high probability; a special case of Haemers' conjecture. 
The methods in \cite{wang2025almost} also apply when $\bX$ is a random matrix with integer entries. 
This and other related works are discussed further in Section \ref{sec: RelatedWork}.

Our goal in the present paper is to determine exactly how the probability that $\bQ^{\T} \bX \bQ$ is a valid matrix depends on the orthogonal matrix $\bQ$, which we do in a non-asymptotic setting and not only for rational orthogonal matrices but also for cases with algebraic entries.
More precisely, we consider the scenario where $\bQ$ has entries in an algebraic number field and where $\bX$ is a symmetric matrix with entries in the field's ring of integers that are sufficiently uniformly distributed. 
Theorem \ref{thm: MainExact} provides an exact formula for the probability that the conjugate is again in the ring of integers.   
This result is already new when the algebraic number field is given by the rational numbers, in which case it concerns the probability that $\bQ^{\T} \bX \bQ\in \bbZ^{n\times n}$ for a symmetric random matrix $\bX \in \bbZ^{n\times n}$.

We give illustrative applications of the obtained formulas in Section \ref{sec: LocalRationalConjugation}. 
Instead of considering an $n\times n$ matrix with entries on $\{0,1 \}$ as $n$ tends to infinity as in \cite{wang2025almost}, we fix $n$ and take entries uniform on $\{0,1,\ldots,k \}$ with $k$ large. 
Theorems \ref{thm: Switching} and \ref{thm: 3Switching} study the expected number of rational orthogonal matrices with a given denominator whose conjugation preserves integrality for $n\leq 3$.
Notably, the dependence on the denominator turns out to be non-monotone due to fluctuations of a number-theoretic nature. 
This is a new phenomenon for the low-dimensional setting with integer matrices.

In particular, formulas for the expected value imply upper bounds on the probability of rational cospectrality.
As a consequence of our results with $n\leq 3$, we show in Corollary \ref{cor: Probability} that the probability of rational cospectrality with arbitrarily fixed denominator is bounded away from one, thus giving variants of \cite{wang2025almost} for low-dimensional matrices.
Here,  probability bounded away from one is the best that one can hope for when the dimensionality is fixed. 

\subsubsection*{Outline} 
We next state our main result in Section \ref{sec: ExactFormula}, consider applications to rational cospectrality in Section \ref{sec: LocalRationalConjugation}, and discuss related literature in Section \ref{sec: RelatedWork}. 
The proofs are given in Sections \ref{sec: ProofThmExact} and \ref{sec: ProofApplication}.

\pagebreak[3]
\subsection{Exact formula}\label{sec: ExactFormula}
We start by introducing notions that are required to state the result.
Consider an algebraic number field $K$ and let $\sO_K \subseteq K$ denote its ring of integers. 
Every algebraic number $k\in K$ can be written as $k = a/m$ with $m\in \bbZ$ and $a\in \sO_K$ \cite[\S2]{neukirch1999algebraic}. 
Thus, the following is well-defined: 

\begin{definition}\label{def: Level}
    The \emph{level} of a matrix $\bQ \in K^{n\times n}$ is the least positive integer $\ell\geq 1$ with the property that $\ell\bQ \in \sO_K^{n\times n}$.     
\end{definition}

Recall that the ring of integers is always a Dedekind domain \cite[\S3]{neukirch1999algebraic}. 
Consequently, for every ideals $\mathfrak{a},\mathfrak{b}\subseteq \sO_K$ with $\mathfrak{a} \subseteq \mathfrak{b}$ and $\mathfrak{b}\neq 0$ there exists a unique ideal $\mathfrak{c}\subseteq \sO_K$ with $\mathfrak{b}\mathfrak{c} = \mathfrak{a}$ \cite[p.41]{marcus1977number}. 
Hence, the following is well-defined: 

\begin{definition}\label{def: DeterminantalIdeal}
    The \emph{$i$th determinantal ideal} $\mathfrak{D}_i\subseteq\sO_K$ of a matrix $\bG\in \sO_{K}^{n\times n}$ is the ideal generated by all $i\times i$ minors.
    The \emph{Smith ideals} $\mathfrak{d}_1,\ldots,\mathfrak{d}_n\subseteq\sO_K$ are defined by $\mathfrak{d}_1 = \mathfrak{D}_1$ and $\mathfrak{d}_i \mathfrak{D}_{i-1}=\mathfrak{D}_i$ for every $i>1$ with $\mathfrak{D}_{i-1} \neq 0$ and $\mathfrak{d}_i \de 0$ otherwise.  
\end{definition}
\pagebreak[3]
Finally, let us recall that the \emph{norm} of an ideal $\mathfrak{a} \subseteq\sO_K$ is the cardinality of the quotient ring $\cN_{\sO_K}(\mathfrak{a})\de \#(\sO_K /\mathfrak{a})$. 
We have the following exact formula:
\begin{theorem}\label{thm: MainExact}
    Let $\bQ\in K^{n\times n}$ be a matrix with level $\ell$ that is orthogonal. 
    Consider a random symmetric $n\times n$ matrix $\bX\in \sO_K^{n\times n}$ whose upper-triangular entries $\{\bX_{i,j}:i\leq j \}$ have independent and uniformly distributed reductions in $\sO_K/\ell^2 \sO_K$. 
    Then, with $\mathfrak{d}_i$ the Smith ideals of $\ell \bQ$, 
    \begin{equation}
        \bbP\bigl(\bQ^{\T} \bX \bQ \in \sO_K^{n\times n} \bigr) = \prod_{i=1}^{\lfloor n/2 \rfloor} \prod_{j=i}^{n-i} \frac{\cN_{\sO_K}(\mathfrak{d}_i \mathfrak{d}_j)}{\cN_{\sO_K}(\ell^2 \sO_K)}.  \label{eq:GrumpyDen}
    \end{equation}
\end{theorem}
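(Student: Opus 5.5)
Write $\bG \de \ell\bQ\in\sO_K^{n\times n}$. Since $\bQ$ is orthogonal, $\bG^{\T}\bG=\bG\bG^{\T}=\ell^2\bI$, and $\bQ^{\T}\bX\bQ\in\sO_K^{n\times n}$ is equivalent to $\bG^{\T}\bX\bG\equiv 0 \pmod{\ell^2\sO_K}$. This condition depends only on the reduction of $\bX$ modulo $\ell^2$, so only the hypothesis on the reductions is needed.

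The plan is to localize, put $\bG$ in Smith normal form over each local ring, and use orthogonality to pair up the Smith exponents. Write $\ell\sO_K=\prod_{\mathfrak p}\mathfrak p^{e_{\mathfrak p}}$. By the Chinese remainder theorem,
\[
\sO_K/\ell^2\sO_K\cong\prod_{\mathfrak p\mid \ell}\sO_K/\mathfrak p^{2e_{\mathfrak p}}.
\]
The entries $\{\bX_{i,j}:i\le j\}$ are uniform and independent modulo $\ell^2$, so their reductions modulo the various $\mathfrak p^{2e_{\mathfrak p}}$ are uniform and jointly independent across primes. The congruence condition splits as a conjunction of conditions modulo each $\mathfrak p^{2e_{\mathfrak p}}$. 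Hence the probability factors as a product over $\mathfrak p\mid\ell$. Both sides of \eqref{eq:GrumpyDen} also factor over primes, since norms are multiplicative and the Smith ideals localize to powers of $\mathfrak p$. So it suffices to prove the $\mathfrak p$-part of the formula, working in the discrete valuation ring $R=\sO_{K,\mathfrak p}$ with uniformizer $\pi$, $q=\cN(\mathfrak p)$ and $e=e_{\mathfrak p}$.

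Over the principal ideal domain $R$ there is a Smith decomposition $\bG=\bU\bD\bV$ with $\bU,\bV\in\mathrm{GL}_n(R)$ and $\bD=\mathrm{diag}(\pi^{a_1},\dots,\pi^{a_n})$, where $a_1\le\dots\le a_n$. Moreover $\mathfrak d_i R=\pi^{a_i}R$, because the determinantal ideals are invariant under multiplication by invertible matrices.

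The key structural step is the duality $a_i+a_{n+1-i}=2e$. To see it, note that $\bG^{\T}=\ell^2\bG^{-1}$ and that $\bG^{\T}$ has the same Smith exponents as $\bG$. On the other hand, $\ell^2\bG^{-1}=\bV^{-1}\,\mathrm{diag}(\pi^{2e-a_i})\,\bU^{-1}\cdot(\text{unit})$, whose sorted exponents are $2e-a_n\le\dots\le 2e-a_1$. Comparing the two lists gives $a_i=2e-a_{n+1-i}$. This is the step I expect to need the most care: one has to ensure that invariance of Smith forms under transpose and inversion is handled rigorously over $R$, for instance via the determinantal ideals.

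Next, set $\bY=\bU^{\T}\bX\bU$. The map $\bX\mapsto\bU^{\T}\bX\bU$ is a bijection on symmetric matrices over $R/\pi^{2e}$, so the upper-triangular entries of $\bY$ are again independent and uniform modulo $\pi^{2e}$. Since $\bV$ is invertible, the condition becomes $\bD\bY\bD\equiv0$, that is,
\[
\pi^{a_i+a_j}\bY_{i,j}\equiv 0 \pmod{\pi^{2e}}\quad\text{for all } i\le j .
\]
These events are independent, with probabilities $q^{-\max(0,\,2e-a_i-a_j)}$.

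It remains to identify which pairs contribute. If $j\ge n+1-i$, then $a_i+a_j\ge a_i+a_{n+1-i}=2e$, so the factor is trivial. If $j\le n-i$, then $a_j\le a_{n-i}\le a_{n+1-i}=2e-a_i$, so the factor equals
\[
q^{a_i+a_j-2e}=\frac{\cN(\mathfrak p^{a_i+a_j})}{\cN(\mathfrak p^{2e})},
\]
which is the $\mathfrak p$-part of $\cN(\mathfrak d_i\mathfrak d_j)/\cN(\ell^2\sO_K)$. The nontrivial range $i\le j\le n-i$ forces $i\le\lfloor n/2\rfloor$. Multiplying over all $\mathfrak p\mid\ell$ then yields \eqref{eq:GrumpyDen}.
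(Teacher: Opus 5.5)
Your proof is correct, and its algebraic part takes a genuinely different route from the paper's. The probabilistic core is shared: both arguments write $\bG=\ell\bQ$ in Smith form $\bU\bD\bV$, use that $\bX\mapsto\bU^{\T}\bX\bU$ preserves the uniform law on symmetric matrices, and reduce to the diagonal condition on $\bD\bX\bD$.

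The paper works globally over the principal ideal ring $\sO_K/\ell^2\sO_K$. This yields the general formula of Theorem~\ref{thm: GeneralQ}, valid for arbitrary $\bG$ and modulus $I$. It then proves the duality $\mathfrak{d}_i\mathfrak{d}_{n+1-i}=\ell^2\sO_K$ through Jacobi's complementary-minor identity on determinantal ideals (Lemma~\ref{lem: DeterminantalIdealDuality}). Finally, it needs the cokernel and Fitting-ideal argument of Lemma~\ref{lem: SmithIdealSmithNormal} and Appendix~\ref{apx: DetailsSmith} to show $d_iR+I=\mathfrak{d}_i+I$, because $R/I$ lacks unique factorization of ideals.

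Splitting by the Chinese remainder theorem and working over the discrete valuation rings $\sO_{K,\mathfrak p}$ lets you avoid all of that machinery:
\begin{enumerate}
\item Existence and uniqueness of the Smith form are elementary over a DVR.
\item The identification $v_{\mathfrak p}(\mathfrak{d}_i)=a_i$, including the ordering $\mathfrak{d}_{i+1}\subseteq\mathfrak{d}_i$, follows at once from Cauchy--Binet applied to the localized determinantal ideals.
\item The duality $a_i+a_{n+1-i}=2e_{\mathfrak p}$ comes from comparing the Smith forms of $\bG^{\T}$ and $\ell^2\bG^{-1}$. This comparison is rigorous because, over a DVR, the sorted exponents of any diagonalization are read off from the determinantal ideals.
\end{enumerate}
The price is the bookkeeping of independence across primes and multiplicativity of norms. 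The result also comes out prime by prime rather than through a single Smith form over $R/I$ for an arbitrary modulus.

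One detail should be stated explicitly. To match your product over $\mathfrak p\mid\ell$ with $\cN_{\sO_K}(\mathfrak{d}_i\mathfrak{d}_j)$, you need the Smith ideals to be supported only on primes dividing $\ell$. This holds because $\mathfrak{d}_1\cdots\mathfrak{d}_n=\mathfrak{D}_n=\det(\bG)\sO_K=\ell^n\sO_K$. Equivalently, your own duality argument at a prime with $e_{\mathfrak p}=0$ forces all $a_i=0$ there.
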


The proof is given in Section \ref{sec: ProofThmExact}. 
We establish a formula for general matrices over a Dedekind domain without orthogonality constraint in Theorem \ref{thm: GeneralQ}, and deduce \eqref{eq:GrumpyDen} by proving that orthogonality implies restrictions on the Smith ideals in Lemma \ref{lem: ProductSmith}.
The method of proof for Theorem \ref{thm: GeneralQ} relies on invariance properties of the random matrix $\bX$ to replace $\bQ$ by a diagonal matrix. 
This method can also be applied to other random matrix ensembles. 
For instance, it also applies to Hermitian random matrices or to matrices without symmetry constraint, so long as the entries are uniformly distributed; see Remark \ref{rem: Invariance}.

It is important to note that \eqref{eq:GrumpyDen} and the required assumption on $\bX$ not only depends on the rational matrix $\bQ$, but also on the algebraic number field $K$:

\begin{example}\label{ex: 2x2}
    Consider the $2\times 2$ orthogonal matrix $\bQ$ defined by 
    \begin{equation}
    \bQ \de \frac{1}{5}\begin{pmatrix}
        3 & 4\\ 
        4 & -3
    \end{pmatrix}\label{eq:OddLog}
    \end{equation}
    For $K = \bbQ$ we have that $\sO_K = \bbZ$ and the level of $\bQ$ is $5$. 
    Further, the determinantal ideals of $5\bQ$ are $\mathfrak{D}_1 = \bbZ$ and $\mathfrak{D}_2 = 25\bbZ$.
    The Smith ideals then equal the determinantal ideals. 
    Consequently, with $\operatorname{SymUnif}_{n\times n}(S)$ the uniform distribution on symmetric $n\times n$ matrices with entries in a finite set $S$, Theorem \ref{thm: MainExact} yields that 
    \begin{equation}
        \bbP\bigl(\bQ^{\T} \bX \bQ \in \bbZ^{n\times n} \bigr)  = 1/25\quad \textnormal{for}\quad \bX \sim \operatorname{SymUnif}_{2\times 2}\{0,1,\ldots,24 \}.  \label{eq:SpookyPaint}
    \end{equation}
    \pagebreak[3]
    We could also view $\bQ$ as a matrix over some other algebraic number field, which does not have to be real for Theorem \ref{thm: MainExact} to apply. 
    For instance, if $K = \bbQ(i)$ with $i \de \sqrt{-1}$ then $\sO_K = \bbZ[i]$ and it follows from $\cN_{\bbZ[i]}(25\bbZ[i]) = 25^2 = 625$ that 
    \begin{equation}
        \bbP\bigl(\bQ^{\T} \bX \bQ \in \bbZ[i]^{n\times n} \bigr) = 1/625  \ \  \textnormal{for}\ \ \bX \sim \operatorname{SymUnif}_{2\times 2}\bigl(a + bi : a,b\in \{0,1,\ldots,24 \} \bigr).\nonumber
    \end{equation}
    Of course, if $K$ is not real, then is may be more natural to consider random Hermitian matrices instead of symmetric matrices. 
    Results that can cover such cases also follow from our proofs; see Remark \ref{rem: Invariance}. 
\end{example}

\subsection{Application to rational cospectrality}\label{sec: LocalRationalConjugation}

Let us now consider $\bK = \bbQ$.
Then, the level of a rational orthogonal matrix $\bQ$ is the least integer $\ell\geq 1$ with $\ell \bQ \in \bbZ^{n\times n}$ and the Smith ideals are of the form $\mathfrak{d}_i = d_i \bbZ$ for $d_i \in \bbZ$ since $\bbZ$ is a principal ideal domain. 
These $d_i$ recover the Smith normal form of the integer matrix $\ell \bQ$.
\begin{corollary}\label{cor: RationalConjugation}
    Suppose that $\bX$ is a random symmetric integer $n\times n$ matrix whose upper-triangular entries $\{\bX_{i,j}: i\leq j \}$ have independent and uniformly distributed reductions to $\bbZ/\ell^2 \bbZ$. 
    Then, 
    $
        \bbP(\bQ^{\T} \bX \bQ \in \bbZ^{n\times n}) = \prod_{i=1}^{\lfloor n/2\rfloor} \prod_{j=i}^{n-i} (d_i d_j/\ell^2).
    $
\end{corollary}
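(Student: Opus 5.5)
This is the specialization of Theorem \ref{thm: MainExact} to $K=\bbQ$, so the plan is to check the hypotheses and then translate norms of ideals into integers. Take $K=\bbQ$. Then $\sO_K=\bbZ$, and the level $\ell$ of $\bQ$ in Definition \ref{def: Level} is the least positive integer with $\ell\bQ\in\bbZ^{n\times n}$. The hypothesis on $\bX$ in the corollary is exactly the hypothesis of Theorem \ref{thm: MainExact} with $\sO_K/\ell^2\sO_K=\bbZ/\ell^2\bbZ$. Applying that theorem therefore gives
\[
\bbP\bigl(\bQ^{\T}\bX\bQ\in\bbZ^{n\times n}\bigr)=\prod_{i=1}^{\lfloor n/2\rfloor}\prod_{j=i}^{n-i}\frac{\cN_{\bbZ}(\mathfrak{d}_i\mathfrak{d}_j)}{\cN_{\bbZ}(\ell^2\bbZ)} .
\]

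Next, identify the Smith ideals. Since $\bbZ$ is a principal ideal domain, write $\mathfrak{D}_i=D_i\bbZ$, with $D_i\ge 0$ the gcd of the $i\times i$ minors of $\ell\bQ$. Orthogonality gives $\det(\ell\bQ)=\pm\ell^n\neq 0$, so every $D_i$ is nonzero. Definition \ref{def: DeterminantalIdeal} then gives $\mathfrak{d}_i=d_i\bbZ$ with $d_1=D_1$ and $d_i=D_i/D_{i-1}$. By the classical theory of the Smith normal form over $\bbZ$, these $d_i$ are exactly the invariant factors of $\ell\bQ$, which justifies calling them the Smith normal form entries.

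Finally, compute the norms. For a nonzero integer $m$ we have $\cN_{\bbZ}(m\bbZ)=\#(\bbZ/m\bbZ)=|m|$. Hence $\cN_{\bbZ}(\mathfrak{d}_i\mathfrak{d}_j)=\cN_{\bbZ}(d_id_j\bbZ)=d_id_j$, taking the $d_i$ positive, and $\cN_{\bbZ}(\ell^2\bbZ)=\ell^2$. Substituting these into the displayed formula yields the stated product.

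The only point needing care is the nonvanishing of the $d_i$, and this follows from the invertibility of $\bQ$. No real obstacle is expected.
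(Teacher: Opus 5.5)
Your proposal is correct and follows the paper's own argument: specialize Theorem \ref{thm: MainExact} to $K=\bbQ$, identify the Smith ideals as $d_i\bbZ$ with $d_i$ the invariant factors of $\ell\bQ$, and use $\cN_{\bbZ}(m\bbZ)=|m|$. Your remark that every $d_i$ is nonzero because $\det(\ell\bQ)=\pm\ell^n$ is the same nondegeneracy the paper records before Lemma \ref{lem: ProductSmith}.
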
 
Corollary \ref{cor: RationalConjugation} can be used to investigate how the frequency of rational cospectrality depends on the level for fixed dimensionality $n$.
The additional required ingredient is an estimate on the number of rational orthogonal matrices with a given denominator and Smith normal form.

We next illustrate this application for $n= 2$ and $n=3$. 
It turns out that exact results can be attained in these cases, which allows us to showcase some new phenomena for the low-dimensional setting. 
The extension to arbitrary $n$ is an interesting open problem that we expect to be challenging. 
We suspect that exact results (with simple formulation) may not be possible for large $n$ and that upper bounds may be a more fruitful question.

Denote $\bbO_n(\ell , \bbQ)$ for the set of all rational orthogonal matrices $\bQ$ with level $\ell\geq 1$.
Then, we count the number of conjugacies of level $\ell$ for $\bX \in \bbZ^{n\times n}$ as 
\begin{equation}
    N_n(\ell) \de \#\bigl\{\bQ \in \bbO_n(\ell , \bbQ): \bQ^{\T} \bX \bQ \in \bbZ^{n\times n} \bigr\}. \label{eq:QuickPaint}
\end{equation}
We give exact formulas for the expected value for $n\leq 3$ and arbitrary $\ell \geq 2$. 
It suffices to consider levels greater than one because $\ell = 1$ corresponds to signed permutations of the rows and columns which trivially preserve integrality. 
\begin{theorem}\label{thm: Switching}
    Fix some arbitrary $\ell \geq 2$ and suppose that $\bX$ is a random symmetric $2\times 2$ matrix as in Corollary \ref{cor: RationalConjugation} with $n =  2$.  
    Then, $\bbO_2(\ell , \bbQ) \neq \emptyset$ if and only if the factorization of $\ell$ into powers of distinct primes is of the form 
    \begin{equation}
        \ell =  p_1^{k_1} \cdots p_r^{k_r} \   \textnormal{ with } \  p_i \equiv 1 \bmod 4\  \textnormal{ for every }i\leq r.\label{eq:KnownToy} 
    \end{equation}
    Moreover, if $\ell$ is as in  \eqref{eq:KnownToy} then $\#\bbO_2(\ell , \bbQ) = 2^{r+3}$ and consequently 
    \begin{equation}
        \bbE\bigl[N_2(\ell)\bigr] =    \frac{2^{r+3}}{\ell^2}.\label{eq:GhostlyFan}    \end{equation}
\end{theorem}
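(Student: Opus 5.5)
The proof splits into three parts: classify the $2\times 2$ rational orthogonal matrices, count those of exact level $\ell$, and then compute the expectation from Corollary \ref{cor: RationalConjugation}.

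\textbf{Classification.} Every $2\times 2$ real orthogonal matrix is a rotation $\begin{pmatrix} a & -b\\ b & a\end{pmatrix}$ or a reflection $\begin{pmatrix} a & b\\ b & -a\end{pmatrix}$ with $a^2+b^2=1$. Such a matrix is rational exactly when $a,b\in\bbQ$.

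\textbf{Level and counting.} Write $a=x/\ell$ and $b=y/\ell$ with $x,y\in\bbZ$ and $\gcd(x,y,\ell)=1$. Then $\ell$ is precisely the level. The constraint becomes $x^2+y^2=\ell^2$, and any common prime divisor of $x$ and $y$ would divide $\ell$, so $\gcd(x,y)=1$. Hence $\#\bbO_2(\ell,\bbQ)$ is twice the number of primitive representations $r^*(\ell^2)=\#\{(x,y)\in\bbZ^2:x^2+y^2=\ell^2,\ \gcd(x,y)=1\}$, the factor $2$ accounting for rotations versus reflections.

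To evaluate $r^*(\ell^2)$ I will use unique factorization in $\bbZ[i]$. A primitive solution corresponds to a Gaussian integer $z=x+iy$ with $z\bar z=\ell^2$ and $z$ divisible by no rational prime.
\begin{itemize}
\item If $p=2$ divides $\ell$, then $(1+i)^2\mid z$ up to a unit, so $2\mid z$, which is not allowed.
\item If $p\equiv 3\bmod 4$ divides $\ell$, then $p$ is inert, so $p\mid z$, again not allowed.
\item If $p\equiv 1\bmod 4$ and $p=\pi\bar\pi$ with $p^k\,\|\,\ell$, primitivity forces the full power $\pi^{2k}$ or $\bar\pi^{2k}$ to divide $z$, giving $2$ choices per prime.
\end{itemize}
Multiplying by the $4$ units gives $r^*(\ell^2)=4\cdot 2^r$ when $\ell$ has the form \eqref{eq:KnownToy}, and $0$ otherwise. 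This proves the existence criterion and $\#\bbO_2(\ell,\bbQ)=2\cdot 4\cdot 2^r=2^{r+3}$. Equivalently, one can quote the classical formula for $r_2(n)$ together with Möbius inversion over squares.

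\textbf{Expectation.} By linearity, $\bbE[N_2(\ell)]=\sum_{\bQ\in\bbO_2(\ell,\bbQ)}\bbP(\bQ^{\T}\bX\bQ\in\bbZ^{2\times 2})$. For $n=2$, Corollary \ref{cor: RationalConjugation} gives the single factor $i=j=1$, namely $d_1^2/\ell^2$. Here $d_1=\gcd(x,y)=1$ for every such $\bQ$ by primitivity. (As a consistency check, $d_2=\ell^2/d_1$ from $\det(\ell\bQ)=\pm\ell^2$, which matches Example \ref{ex: 2x2}.) So each summand equals $1/\ell^2$, which yields \eqref{eq:GhostlyFan}.

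The main obstacle is the Gaussian-integer counting of primitive representations, in particular getting the prime-$2$ exclusion and the factor $2^r$ right. Everything else is direct.
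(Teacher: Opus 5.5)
Your proposal is correct and follows the paper's proof. Like the paper, you use the one-to-two correspondence (rotation and reflection) between $\bbO_2(\ell,\bbQ)$ and primitive solutions of $x^2+y^2=\ell^2$, and then observe that $d_1=1$, so that each matrix contributes $1/\ell^2$ via Corollary \ref{cor: RationalConjugation}. The only difference is that you derive the count $4\cdot 2^r$ of primitive representations from unique factorization in $\bbZ[i]$, while the paper cites this classical formula from Cooper and Hirschhorn.
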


\begin{theorem}\label{thm: 3Switching}
    Fix some $\ell \geq 2$ and let $\bX$ be a random symmetric $3\times 3$ matrix as in Corollary \ref{cor: RationalConjugation} with $n =  3$.  
    Then, $\bbO_3(\ell , \bbQ) \neq \emptyset$ if and only $\ell$ is odd. 
    In that case, 
    \begin{equation}
        \bbE[N_3(\ell)] = \frac{48}{\ell^2} \prod_{p\mid \ell}\Bigl(1 + \frac{1}{p} \Bigr), \label{eq:KindRock}
    \end{equation}
    where the product runs over the distinct prime divisors of $\ell$.    
\end{theorem}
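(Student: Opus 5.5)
By Corollary \ref{cor: RationalConjugation} with $n=3$, the product in \eqref{eq:GrumpyDen} has only the factors $(i,j)=(1,1)$ and $(1,2)$. So for every $\bQ\in\bbO_3(\ell,\bbQ)$ we get $\bbP(\bQ^{\T}\bX\bQ\in\bbZ^{3\times3}) = d_1^3d_2/\ell^4$, where $d_1,d_2,d_3$ is the Smith normal form of $\ell\bQ$.

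\textbf{Step 1: the Smith form of $\ell\bQ$.}
\begin{itemize}
\item Minimality of the level means the entries of $\ell\bQ$ have $\gcd$ one, so $d_1=1$.
\item Orthogonality gives $(\ell\bQ)^{\T}(\ell\bQ)=\ell^2 I$. Lemma \ref{lem: ProductSmith}, applied via the adjugate/inverse relation, should give $d_id_{4-i}=\ell^2$.
\item Hence $d_3=\ell^2$ and $d_2^2=\ell^2$, so $d_2=\ell$.
\end{itemize}
Every $\bQ$ of level $\ell$ therefore has probability exactly $\ell^{-3}$. By linearity of expectation,
\[
\bbE[N_3(\ell)]=\#\bbO_3(\ell,\bbQ)/\ell^3 .
\]
The theorem reduces to showing that $\#\bbO_3(\ell,\bbQ)=48\,\ell\prod_{p\mid\ell}(1+1/p)$ for odd $\ell$, and $0$ for even $\ell$.

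\textbf{Step 2: even $\ell$ is impossible.} Each row of $\ell\bQ$ is an integer vector $a$ with $a_1^2+a_2^2+a_3^2=\ell^2\equiv 0 \bmod 4$. Squares are $0$ or $1$ mod $4$, so all $a_i$ are even. Hence every entry of $\ell\bQ$ is even, contradicting $d_1=1$.

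\textbf{Step 3: setting up the count for odd $\ell$.} Multiplying by $-I$ identifies the determinant $-1$ matrices of level $\ell$ with the rotations of level $\ell$. So it suffices to show that level-$\ell$ rotations number $24\,\ell\prod_{p\mid \ell}(1+1/p)$. For this I would use the Euler--Rodrigues parametrization: every rational rotation is $R_q=\frac{1}{N(q)}(\text{quadratic expression in }q)$ for a primitive Lipschitz quaternion $q\in\bbZ^4$, unique up to sign. The claim I would prove is the following.
\begin{itemize}
\item For odd $\ell$, the level of $R_q$ equals $N(q)$ stripped of its power of $2$.
\item The only possible $2$-parts of $N(q)$ are $1$, $2$ and $4$. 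The value $4$ occurs exactly when all four coordinates are odd; a primitive $q$ with $8\mid N(q)$ cannot occur.
\end{itemize}
This is shown by checking the common divisor of the entries of the integer matrix $N(q)R_q$. An odd prime $p\mid N(q)$ that divides all of these entries would force $p\mid q$, contradicting primitivity. The $2$-adic claims follow from a mod-$8$ case analysis of the coordinate parities. Level-$\ell$ rotations then correspond two-to-one to primitive $q$ with $N(q)\in\{\ell,2\ell,4\ell\}$.

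\textbf{Step 4: counting primitive representations.} I would use Jacobi's four-square formula: for odd $m$, $r_4(m)=8\sigma(m)$ and $r_4(2m)=r_4(4m)=24\sigma(m)$. Möbius inversion over square divisors gives the primitive counts; the multiplicative identity behind this is $\sum_{e^2\mid\ell}\mu(e)\sigma(\ell/e^2)=\ell\prod_{p\mid\ell}(1+1/p)$. This yields:
\begin{itemize}
\item $8\,\ell\prod(1+1/p)$ primitive representations of $\ell$;
\item $24\,\ell\prod(1+1/p)$ primitive representations of $2\ell$;
\item $16\,\ell\prod(1+1/p)$ primitive representations of $4\ell$, since the non-primitive ones are exactly $2q'$ with $N(q')=\ell$, which removes $8\sigma$ from $24\sigma$ at the level of divisor sums.
\end{itemize}
The total is $48\,\ell\prod(1+1/p)$; halving for $\pm q$ gives $24\,\ell\prod(1+1/p)$ rotations, as needed. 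As a sanity check, $\ell=1$ gives $24$ rotations and $48$ signed permutation matrices.

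The main obstacle is Step 3: establishing rigorously that the level of $R_q$ is exactly the odd part of $N(q)$, and that the three $2$-adic classes occur precisely as described. The $2$-adic bookkeeping is where mistakes are most likely, so I would verify it by explicit parity cases on the coordinates of $q$.
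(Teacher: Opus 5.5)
Your proposal is correct and follows the paper's proof essentially step for step: the Smith form $(1,\ell,\ell^2)$ giving probability $\ell^{-3}$ for each matrix, the mod-$4$ obstruction for even $\ell$, the $\pm\bQ$ symmetry, Euler--Rodrigues with level $N(q)/g$ for some $g\in\{1,2,4\}$, and the primitive four-square counts $8+24+16=48$ (each times $\ell\prod_{p\mid\ell}(1+1/p)$). The differences are cosmetic. You derive the primitive counts from Jacobi's formula by M\"obius inversion instead of citing Cooper--Hirschhorn. You also check the $2$-adic cases by parity instead of using odd level to pin down $g$. Finally, for $d_2=\ell$ it is cleaner to use $d_1d_2d_3=|\det(\ell\bQ)|=\ell^3$, since the middle index $i=2$ lies outside the stated range $i\le n/2$ of Lemma \ref{lem: ProductSmith}.
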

The proofs of Theorems \ref{thm: Switching} and \ref{thm: 3Switching} rely on reductions to known Diophantine counting problems. 
For instance, counting $2\times 2$ orthogonal matrices at level $\ell$ can be reduced to counting integers $a,b$ with $a^2 + b^2 = \ell^2$ and $\operatorname{gcd}(a,b)=1$, which is a classical problem \cite[Eq.(1.6)]{cooper2007number}. 
Using Corollary \ref{cor: RationalConjugation} and that a $2\times 2$ orthogonal matrix at level $\ell$ always has Smith normal form $(d_1,d_2) =(1,\ell^2)$ then yields \eqref{eq:GhostlyFan}. 
The proof of \eqref{eq:KindRock} is similar in spirit and relies on the Euler--Rodrigues parametrization of the special orthogonal group in dimension three.
Details are given in Sections \ref{sec: ProofThm2} and \ref{sec: ProofThm3}.

It is interesting to note that \eqref{eq:GhostlyFan} and \eqref{eq:KindRock} are not monotone in $\ell$, even constrained to those values where the expected value is nonzero.
Specifically, while the the factor $1/\ell^2$ ensures that the average trend is downwards as $\ell$ grows large, the prime factorization  can fluctuate from one number to the next.
This phenomenon is clearly visible in Figure \ref{fig:combined_plots} where we see that the factor $2^r$ gives rise to parallel streaks on a log-log plot for $n=2$, while the pattern is less orderly for $n=3$ because the factor $\prod_{p\mid \ell} (1+1/p)$ depends not only on the number of prime factors but also on their size.

\begin{figure}[h]
    \centering
    \begin{subfigure}[b]{0.495\textwidth}
        \centering
        \begin{tikzpicture}
            \begin{loglogaxis}[
                width=5.15cm, height=8cm,
                xlabel={Level $\ell$},
                ylabel={$\mathbb{E}[N_2(\ell)]$}, 
                label style={font=\scriptsize},      
                tick label style={font=\scriptsize},
                scale only axis,
                xmin=3.0473398664218925, xmax=164059.48201210098,
                ymin=5.9445270913812e-10, ymax=1.7229719058083621,
                axis on top
            ]
            \addplot graphics[
                xmin=3.0473398664218925, xmax=164059.48201210098,
                ymin=5.9445270913812e-10, ymax=1.7229719058083621
            ] {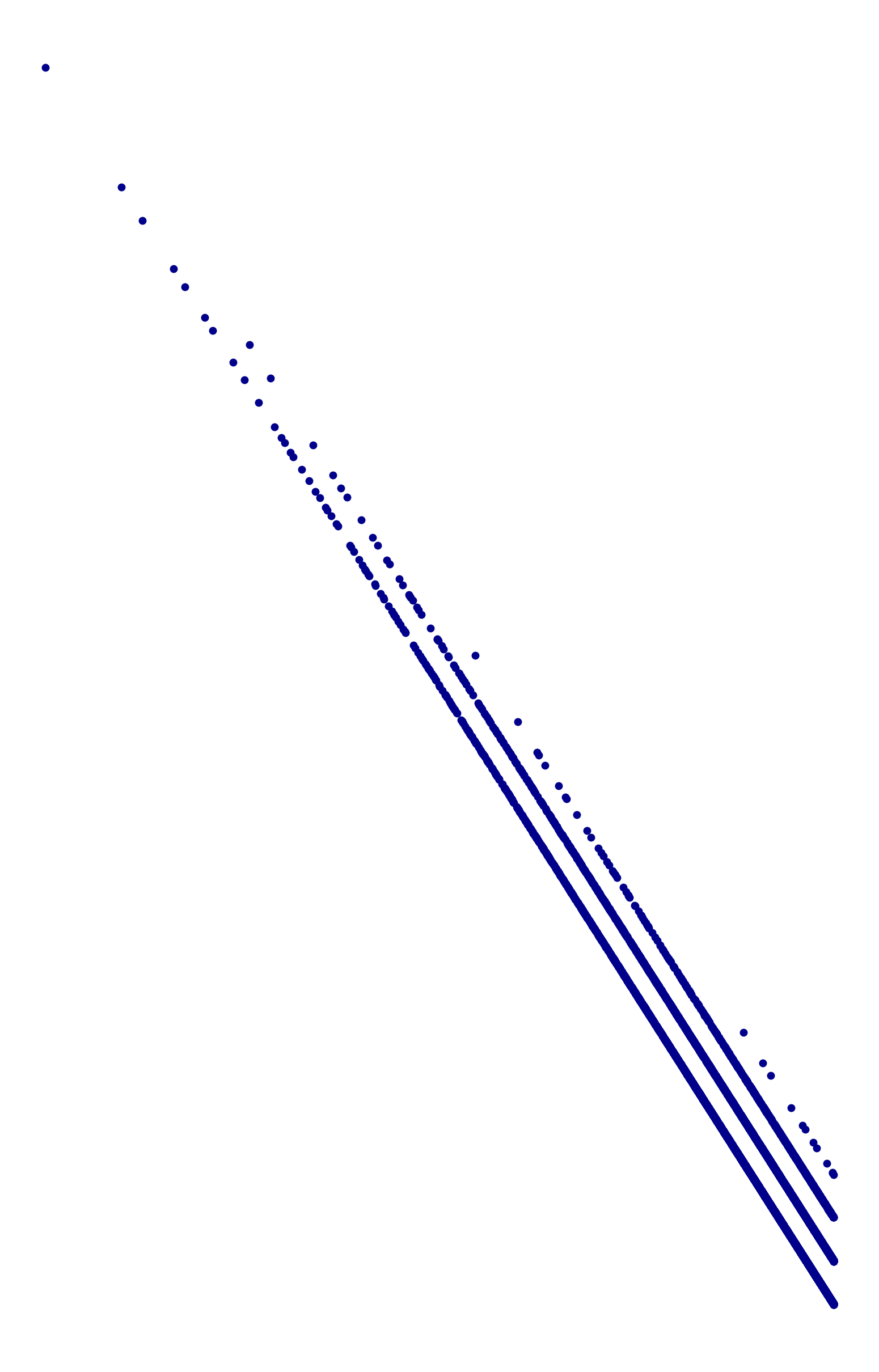};
            \end{loglogaxis}
        \end{tikzpicture}
    \end{subfigure}
    \hfill  
    \begin{subfigure}[b]{0.495\textwidth}
        \centering
        \begin{tikzpicture}
            \begin{loglogaxis}[
       width=5.15cm, height=8cm,
                xlabel={Level $\ell$},
                ylabel={$\mathbb{E}[N_3(\ell)]$}, 
                label style={font=\scriptsize},      
                tick label style={font=\scriptsize},
                scale only axis,
        xmin=1.7822865700776036, xmax=168321.41645265138,
        ymin=8.351472899805355e-10, ymax=10.219700164859102,
        axis on top
    ]
    \addplot graphics[
        xmin=1.7822865700776036, xmax=168321.41645265138,
        ymin=8.351472899805355e-10, ymax=10.219700164859102
    ] {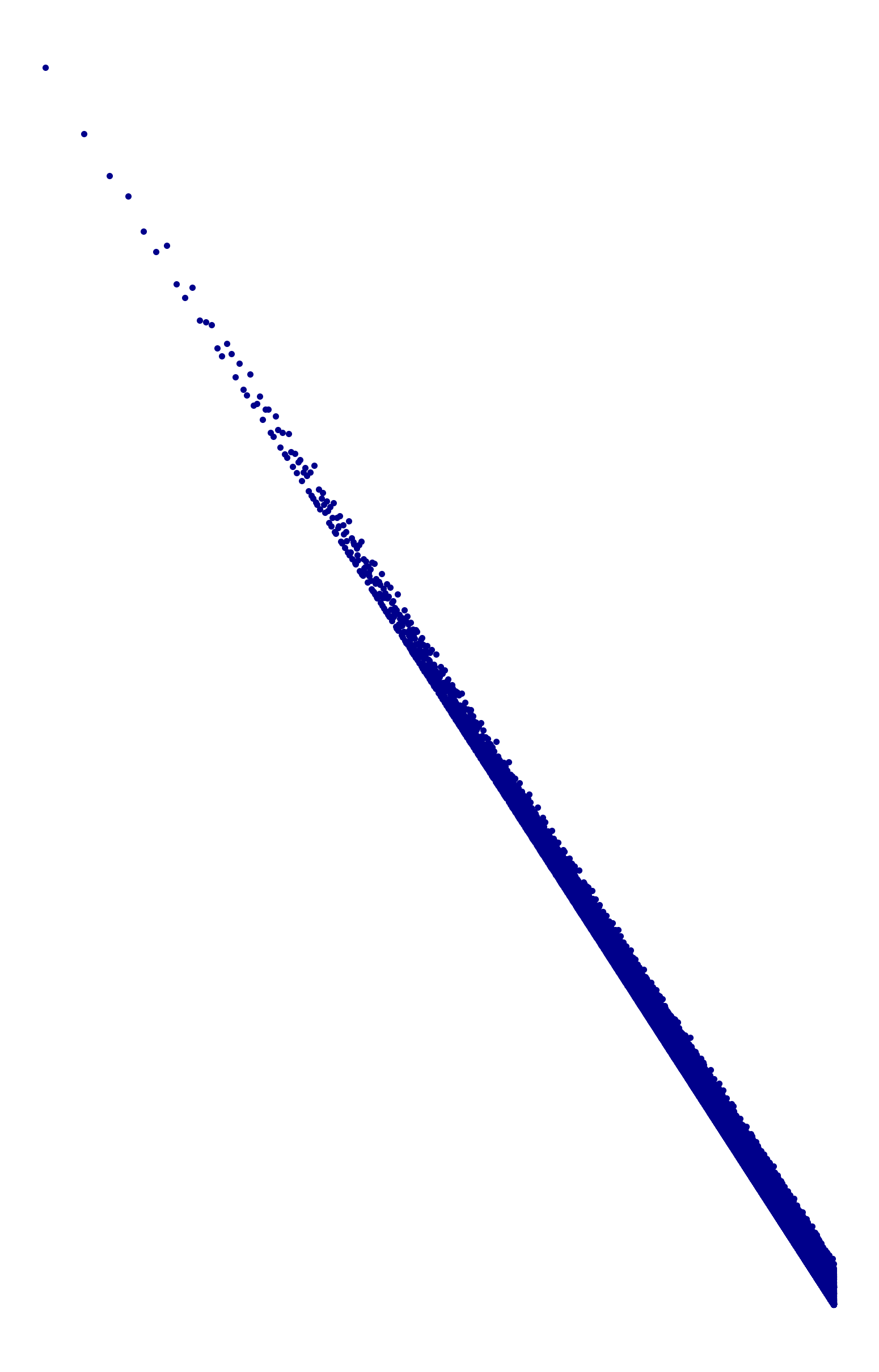};
    \end{loglogaxis}
    \end{tikzpicture}
    \end{subfigure}
    \caption{The sequence $\ell \mapsto \bbE[N_n(\ell)]$ is not monotone for $n=2,3$.}
    \label{fig:combined_plots}
\end{figure}

The expected value $\bbE[N_n(\ell)]$ can, in particular, be used to give upper bounds on the probability that $N_n(\ell) \neq 0$. 
This enables estimates on the probability of cospectrality at bounded but arbitrarily large level. 
The proof of the following result and explicit constants are given in Section \ref{sec: ProofCorProbability}. 
\begin{corollary}\label{cor: Probability}
    For any $k,n\geq 1$ let $\bX_k$ be a random symmetric $n\times n$ matrix whose upper-triangle has independent and uniform entries on $\{1,\ldots,k \}$. Then, for $n\leq 3$,
    \begin{equation}
        \lim_{L \to \infty} \lim_{k\to \infty} \bbP\Bigl( \exists \bQ \in \cup_{\ell=2}^L\bbO_n(\ell , \bbQ): \bQ^{\T} \bX_k \bQ \in \bbZ^{n\times n} \Bigr) < 1. \label{eq:IdleUrsa} 
    \end{equation}
\end{corollary}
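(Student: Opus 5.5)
We bound the probability in \eqref{eq:IdleUrsa} from above by a first-moment estimate and then show that this bound is strictly below one. Fix $L\geq 2$. By the union bound and Markov's inequality,
\begin{equation*}
\bbP\Bigl( \exists \bQ \in \cup_{\ell=2}^L\bbO_n(\ell , \bbQ): \bQ^{\T} \bX_k \bQ \in \bbZ^{n\times n} \Bigr) \leq \sum_{\ell=2}^L \bbE\bigl[N_n(\ell)\bigr]_{\bX = \bX_k}.
\end{equation*}
For fixed $\ell$, write $\bX_k$ modulo $\ell^2$. As $k\to\infty$, the reduction of a uniform variable on $\{1,\ldots,k\}$ converges in total variation to the uniform distribution on $\bbZ/\ell^2\bbZ$, with error $O(\ell^2/k)$. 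The event $\bQ^{\T}\bX\bQ\in\bbZ^{n\times n}$ depends only on $\bX \bmod \ell^2$, since $\ell\bQ$ is integral. The set $\bbO_n(\ell,\bbQ)$ is finite, because its entries are rationals of absolute value at most one with denominator dividing $\ell$. Hence the finite sum converges as $k\to\infty$ to $\sum_{\ell=2}^L \bbE[N_n(\ell)]$, computed with the exactly uniform reductions of Corollary \ref{cor: RationalConjugation}. Letting $L\to\infty$, the left-hand side of \eqref{eq:IdleUrsa} is at most $S_n \de \sum_{\ell\geq 2}\bbE[N_n(\ell)]$; the outer limit exists because the probabilities are monotone in $L$.

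The case $n=1$ is trivial, since $\bbO_1(\ell,\bbQ)=\emptyset$ for $\ell\geq 2$. For $n=2,3$ we evaluate $S_n$ using Theorems \ref{thm: Switching} and \ref{thm: 3Switching}, and we need an explicit numerical bound. Here a crude bound does not suffice: $S_2 = \sum_{\ell} 2^{r+3}/\ell^2$ over $\ell$ composed of primes $\equiv 1 \bmod 4$ has leading term $16/25$ at $\ell=5$, and $S_3$ has leading term $48\cdot(4/3)/9 = 64/9>1$ at $\ell=3$. So the naive first moment fails for $n=3$, and probably also for $n=2$ once we sum the Euler product $\prod_{p\equiv 1(4)}\bigl(1+ 4/(p^2-1)\bigr) - 1$; this is where the main difficulty lies.

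To fix this, we exploit the fact that conjugacies come in large orbits. If $\bQ$ works, then so does $\bQ\bP$ for every signed permutation $\bP$, and so do $\pm\bQ$ composed with the transpose. Instead of counting all $\bQ$, we count equivalence classes modulo the hyperoctahedral group $B_n$ (of order $8$ for $n=2$ and $48$ for $n=3$) acting on the right. The event is invariant under this action, so Markov's inequality can be applied to the number of classes. This divides the sums by roughly $|B_n|$, giving $S_2/8 = \sum 2^{r}/\ell^2$ and $S_3/48=\sum_{\ell \text{ odd}\geq 3} \ell^{-2}\prod_{p\mid\ell}(1+1/p)$. The latter is at most
\begin{equation*}
\prod_{p \text{ odd}}\Bigl(1+\frac{p+1}{p(p^2-1)}\Bigr)-1=\prod_{p\text{ odd}}\Bigl(1+\frac{1}{p(p-1)}\Bigr)-1,
\end{equation*}
which is about $0.94<1$. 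Rigorously this requires checking that $B_n$ acts freely on $\bbO_n(\ell,\bbQ)$, which holds since $\bQ\bP=\bQ$ forces $\bP=\mathbf{I}$. We then bound the Euler products numerically: compute the first few primes exactly and bound the tail by $\sum_{p>P}1/(p(p-1))$. For $n=2$ the analogous quantity $\prod_{p\equiv1(4)}\bigl(1+2/(p^2-1)\bigr)-1\approx 0.1$ is comfortably less than $1$.
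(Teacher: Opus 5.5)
Your proposal is correct and is essentially the paper's proof. The free right action of the signed permutations is exactly the paper's lemma giving $\bbP(\cdots)\leq (2^n n!)^{-1}\sum_{\ell=2}^L\bbE[N_n(\ell)]$, and the argument then finishes with the same Euler products $\prod_{p\equiv 1 \bmod 4}\bigl(1+2/(p^2-1)\bigr)-1=12G/\pi^2-1$ and $\prod_{p\textnormal{ odd}}\bigl(1+1/(p(p-1))\bigr)-1=105\zeta(3)/\pi^4-1$.

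There are only minor slips, none of which affects the conclusion:
\begin{itemize}
\item The odd-prime product gives about $0.296$, not $0.94$. Your value is the product over all primes, so it is still a valid upper bound.
\item The aside about composing with the transpose is false in general: $\bQ^{\T}\bX\bQ\in\bbZ^{n\times n}$ does not imply $\bQ\bX\bQ^{\T}\in\bbZ^{n\times n}$. You never use it, though.
\item For $n=2$ the undivided first moment is $8(12G/\pi^2-1)\approx 0.91$, so it would in fact already suffice there.
\end{itemize}
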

\pagebreak[3]
It would be interesting future work if one could show that the order of the limits in \eqref{eq:IdleUrsa} may be exchanged, as this would imply the strictly stronger claim that there is a non-vanishing probability that $\bQ^{\T}\bX_k  \bQ \not\in \bbZ^{2\times 2}$ for \emph{all} rational orthogonal matrices $\bQ$ that are not signed permutations. 
The latter does not follow from the methods in the current paper since it also requires controlling levels $\ell\gg k$.  
Here, note that non-vanishing probability is the best that one can hope for when the dimensionality $n$ is fixed. 
We propose the following: 

\begin{conjecture}
    Fix some $n\geq 1$.
    Let $\bX_k$ be a random symmetric $n\times n$ matrix whose upper-triangular entries are independent and uniform on $\{1,\ldots,k \}$.  
    Then, 
    \begin{equation}
        \limsup_{k\to \infty} \bbP\bigl(\exists \bQ  \in \cup_{\ell=2}^\infty \bbO_n(\ell , \bbQ): \bQ^{\T}\bX_k \bQ \in \bbZ^{n\times n}  \bigr) <1.
    \end{equation}
\end{conjecture}
The natural variants of this conjecture for algebraic number fields would also be of considerable interest.

\subsection{Related work}\label{sec: RelatedWork}
The new finding in Corollary \ref{cor: RationalConjugation} that the Smith normal form determines the probability yields a natural connection to recent work on deterministic theory. 
In particular, Qiu, Wang, and Zhang  \cite{qiu2023smith} emphasized the Smith normal form of a graph's \emph{walk matrix} in deterministic sufficient conditions for characterization by generalized spectrum that improve upon earlier conditions by Wang \cite{wang2017simple}.\footnote{The generalized setting is equivalent to imposing that the cospectrality arises through an orthogonal matrix $\bQ$ with $\bQ e  = e$ with $e$ the all-ones vector; see also Johnson and Newman \cite{johnson1980note} for the origins of this notion as well as equivalent formulations.}
The inverse of the walk matrix there relates to the orthogonal matrix realizing the cospectrality, which has to be rational in their setting \cite[Lemma 2.1]{qiu2023smith}. 
In particular, the Smith normal forms are related. 
This shared feature seems to hint at deeper connections. 
For instance, it is natural to wonder if Smith ideals can be used in conditions for notions of cospectrality over algebraic number fields. 

\pagebreak[3]

Deterministic sufficient conditions for an integer symmetric matrix $\bX\in \bbZ^{n\times n}$ that imply that $\bQ^{\T}\bX \bQ\not\in \bbZ^{n\times n}$ for \emph{all} rational orthogonal matrices $\bQ$ that are not signed permutations have been developed by Wang and Yu \cite{wang2016square} as well as by Wang, Yang, and Zhang \cite{wang2024rational}. 
Those conditions allow ruling out arbitrary level. 
Numerical evidence suggest that they apply with non-vanishing frequency, but no rigorous bounds are presently known. 
Specific conjectures will appear in forthcoming work of Lvov and the present author \cite{vanwerde2026satisfaction}. 
One of the ingredients in our proofs is inspired by a related argument from that forthcoming work; see the discussion preceding Lemma \ref{lem: UniformSymQ}. 

Recall from the introduction that \cite{wang2025almost} proved that $\bQ^{\T}\bX \bQ \not\in \bbZ^{n\times n}$ for all $\bQ\in \bbO_n(\ell,\bbQ)$ for fixed $\ell$ with high probability as $n$ tends to infinity. 
Their results improved upon earlier work of Wang and Xu \cite{wang2010asymptotic} that concerned a special case with level two with the additional constraint that $\bQ e = e$ for $e=(1,\ldots,1)^{\T}$. 
The proof methods in \cite{wang2025almost,wang2010asymptotic} exploit that $\bQ$ has to be sparse when its level is small relative to the dimensionality.
Note that such sparsity is not needed in the present paper.
It is would be interesting future work if one could also use the Smith normal form in bounds for matrices with non-uniform entries, as this could enable extending \cite{wang2025almost} to larger levels or fixed $n$ when sparsity is not valid.

\pagebreak[3]
A setting without rationality constraint but instead assuming that the orthogonal matrix only has a small nontrivial block has recently been studied by Abiad, Van de Berg, and Simoens  \cite{abiad2025counting}. 
More precisely, they considered the \emph{switching method} associated to a fixed orthogonal matrix $\bQ_{m\times m} \in \bbR^{m\times m}$, which amounts to conjugation by $\bQ = \bP \operatorname{diag}(\bQ_{m\times m}, \b1_{n-m})$ with $\bP$ a permutation matrix and $\b1_{n-m}$ the identity.
The results in \cite{abiad2025counting} determine the asymptotic probability that a given switching method produces another graph that is moreover non-isomorphic to the original graph.  
Our main results could potentially be a starting point for generalizations of those results in the direction of random integral matrices.

Let us finally stress that the Smith and determinantal ideals in Theorem \ref{thm: MainExact} come from the orthogonal matrix $\bQ$, not from the matrix $\bX$. 
The different question of what can be deduced from ideals associated to $\bX$ has however also been considered by various authors. 
We refer the interested reader to \cite{elsheikh2015relating,newman1991matrices,rushanan1995eigenvalues} for works that relate properties of the Smith ideals to the eigenvalues of a deterministic integer matrix $\bX$, and to \cite{abiad2022codeterminantal,abiad2025distinguishing} for investigations concerning graphs that share the same determinantal ideals.

\section{Proof of \texorpdfstring{Theorem \ref{thm: MainExact}}{Theorem}}\label{sec: ProofThmExact}
Let $R$ be a Dedekind domain and fix an ideal $I\subseteq R$ with $R/I$ finite. 
It suffices to study the probability that $\bG^{\T} \bX \bG \equiv 0 \bmod I$ for a fixed matrix $\bG \in R^{n\times n}$.  
Indeed, Theorem \ref{thm: MainExact} then follows by taking $R = \sO_K$, $I = \ell^2 \sO_K$ and $\bG = \ell \bQ$.  
 
Every nontrivial quotient of a Dedekind domain is a principal ideal ring, although not necessarily a domain (see \eg \cite[p.278]{zariski2013commutative}).
This implies the existence of a unique Smith normal form  \cite[Theorem 15.24]{brown1993matrices}. 
That is, there exist $\bU,\bV \in \operatorname{GL}_n(R/I)$ and $d_1,\ldots,d_n \in R$ with $d_{i+1}R + I \subseteq d_{i}R+ I$ for every $i$ such that 
\begin{equation}
    \bG \equiv \bU \bD \bV \bmod I \quad\textnormal{ with }\quad \bD \de \operatorname{diag}(d_1,\ldots,d_n), \label{eq:JollyInk}
\end{equation}
and the reductions of the $d_i$ to $R/ I$ are unique up to multiplication by units.

The main idea in the proof of the following Lemma \ref{lem: UniformSymQ} is to argue that we may replace $\bG$ by the diagonal matrix $\bD$. 
To this end, we rely on an invariance property of uniform symmetric matrices that also plays a key role in a forthcoming work of Lvov and the present author on the satisfaction frequency of sufficient conditions \cite{vanwerde2026satisfaction}.
Recall that the norm of the ideal $I\subseteq R$ is given by $\cN_R(I) = \#R/I$. 
\begin{lemma}\label{lem: UniformSymQ}
    Consider a random symmetric $n\times n$ matrix $\bX\in R^{n\times n}$ whose upper-triangular entries $\{\bX_{i,j}:i\leq j \}$ have independent and uniformly distributed reductions in $R/I$. 
    Then, with $d_1,\ldots,d_n$ as in \eqref{eq:JollyInk}, 
    \begin{equation}
        \bbP\bigl(\bG^{\T} \bX \bG \equiv 0 \bmod I \bigr) = \prod_{i=1}^{n} \prod_{j=i}^{n} \frac{\#\{k\in R/I: kd_id_j \equiv 0 \bmod I\}}{\cN_{R}(I)}. \label{eq:VagueCrow}
    \end{equation}
\end{lemma}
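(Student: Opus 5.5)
Write $\bar{\bX}$ for the reduction of $\bX$ modulo $I$, viewed as a uniformly distributed element of the finite set $\operatorname{Sym}_n(R/I)$ of symmetric matrices over $R/I$. The event only depends on $\bar{\bX}$, so we work entirely over $R/I$. The plan is to use \eqref{eq:JollyInk} to replace $\bG$ by $\bD$ and then compute the probability entry by entry.

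The key invariance is this: for any $\bU\in\operatorname{GL}_n(R/I)$, the map $\bY\mapsto \bU^{\T}\bY\bU$ is a bijection of $\operatorname{Sym}_n(R/I)$, with inverse given by conjugation by $\bU^{-1}$. Hence $\bU^{\T}\bar{\bX}\bU$ is again uniform on $\operatorname{Sym}_n(R/I)$. By \eqref{eq:JollyInk},
\[
\bG^{\T}\bX\bG \equiv \bV^{\T}\bD\bigl(\bU^{\T}\bX\bU\bigr)\bD\bV \bmod I .
\]
Since $\bV$ is invertible over $R/I$, this vanishes modulo $I$ if and only if $\bD\,\bY\,\bD\equiv 0 \bmod I$, where $\bY \de \bU^{\T}\bar{\bX}\bU$. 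By the invariance, $\bY$ has the same law as $\bar{\bX}$: its upper-triangular entries are independent and uniform in $R/I$.

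Now $(\bD\bY\bD)_{i,j} = d_i d_j \bY_{i,j}$. By symmetry the condition reduces to $d_id_j\bY_{i,j}\equiv 0 \bmod I$ for all $i\le j$. These are independent events, and each has probability $\#\{k\in R/I: kd_id_j\equiv 0\}/\cN_R(I)$. Taking the product over $i\le j$ gives \eqref{eq:VagueCrow}.

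The only points needing care are the bijection claim and the fact that $\bU,\bV$ are invertible over the quotient ring rather than over $R$. Both are handled by doing all computations in $R/I$, where matrix multiplication by units is well defined. There is no real obstacle. The one subtlety is that uniformity of the upper triangle of $\bar{\bX}$ is the same as uniformity on $\operatorname{Sym}_n(R/I)$, which holds because a symmetric matrix is determined by its upper triangle.
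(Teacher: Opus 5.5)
Your proof is correct and follows essentially the same route as the paper: replace $\bG$ by $\bU\bD\bV$ via \eqref{eq:JollyInk}, use that $\bU^{\T}\bX\bU \bmod I$ is again uniform, drop the invertible $\bV$, and factor the probability over the independent entries $i\le j$. The only difference is how you justify the invariance: you note that $\bY\mapsto\bU^{\T}\bY\bU$ is a bijection of $\operatorname{Sym}_n(R/I)$, whereas the paper shows that the law of $\bU^{\T}\bX\bU \bmod I$ is translation invariant; both arguments are valid.
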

\begin{proof} 
    Note that the uniform law over $R/I$ is characterized by its translation invariance. 
    Consequently, the law of $\bX \bmod I$ is characterized the fact that $\bX$ is symmetric and that $\bX + \bS \bmod I$ has the same law as $\bX$ for every deterministic symmetric $\bS$.  
    Further, for every deterministic symmetric $\bS \in (R/I)^{n\times n}$, it holds with $\bU \in \operatorname{Gl}_n(R/I)$ the invertible matrix from \eqref{eq:JollyInk} that 
    \begin{equation}
        \bU^{\T} \bX \bU + \bS \equiv \bU^{\T}( \bX  + \tilde{\bS}) \bU \bmod I \quad \textnormal{ with }\quad \tilde{\bS} \de (\bU^{-1})^{\T} \bS (\bU^{-1}). 
    \end{equation} 
    Consequently, the translation invariance of the law of $\bX \bmod I$ implies that the law of $\bU^{\T}\bX\bU \bmod I$ is also translation invariant.  
    Hence, $\bU^{\T}\bX\bU \bmod I$ has the same law as $\bX \bmod I$; the uniform law.
    Now, using \eqref{eq:JollyInk},
    \begin{equation}
        \bbP\bigl(\bG^{\T} \bX \bG \equiv 0 \bmod I \bigr)  =  \bbP(\bV^{\T} \bD \bX \bD \bV  \equiv 0 \bmod I).
    \end{equation}
    The invertibility of $\bV \in \operatorname{GL}_n(R/I)$ further implies that $\bV^{\T} \bD \bX \bD \bV  \equiv 0 \bmod I$ if and only if $\bD \bX \bD \equiv 0 \bmod I$. 
    Using the independence and uniformity of the upper-triangular entries now yields \eqref{eq:VagueCrow}.     
\end{proof}
\begin{lemma}\label{lem: CardinalitySimplify}
    It holds for every $d\in R$ that 
    \begin{equation}
        \#\{k \in R /I : kd \equiv 0 \bmod I\} = \cN_{R}(dR + I).\label{eq:ZombieDragon} 
    \end{equation}
\end{lemma}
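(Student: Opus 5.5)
The plan is to identify the set on the left of \eqref{eq:ZombieDragon} as the kernel of the multiplication-by-$d$ map on the finite group $R/I$, and then count it with the first isomorphism theorem. Define the additive group homomorphism
\[
    \mu_d : R/I \to R/I, \qquad k + I \mapsto kd + I,
\]
which is well defined because $I$ is an ideal. The set $\{k\in R/I : kd\equiv 0 \bmod I\}$ is exactly $\ker \mu_d$. The image of $\mu_d$ is $(dR+I)/I$. Since $R/I$ is finite, the first isomorphism theorem gives
\[
    \#\ker\mu_d \cdot \#\bigl((dR+I)/I\bigr) = \#(R/I) = \cN_R(I).
\]

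Next, to relate the image to $\cN_R(dR+I)$, I apply the third isomorphism theorem to the chain $I\subseteq dR+I\subseteq R$. This gives $R/(dR+I)\cong (R/I)\big/\bigl((dR+I)/I\bigr)$, so
\[
    \cN_R(dR+I)\cdot \#\bigl((dR+I)/I\bigr) = \cN_R(I).
\]
Both quantities are finite because $dR+I\supseteq I$ and $R/I$ is finite. Comparing the two displays, $\#\ker\mu_d$ and $\cN_R(dR+I)$ are both equal to $\cN_R(I)/\#\bigl((dR+I)/I\bigr)$, which proves \eqref{eq:ZombieDragon}.

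There is no real obstacle here; the only care needed is to check well-definedness and finiteness. One remark: the argument never uses the Dedekind property. It holds for any commutative ring $R$ with $R/I$ finite, including the degenerate cases $d\in I$, where both sides equal $\cN_R(I)$, and $d$ a unit modulo $I$, where both sides equal $1$. I would also note for later use that, combined with Lemma \ref{lem: UniformSymQ}, this turns each factor of \eqref{eq:VagueCrow} into $\cN_R(d_id_jR+I)/\cN_R(I)$.
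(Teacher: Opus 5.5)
Your proof is correct and follows the paper's route: the paper also identifies the left-hand side as the kernel of multiplication by $d$ on $R/I$, notes that the image is $(dR+I)/I$, and concludes with the first and third isomorphism theorems. Your remark that this step does not use the Dedekind hypothesis is also accurate.
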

\begin{proof}
Consider the group morphism
$\phi:R/I \to  R/I$ defined by $\phi(x) = x d$. 
Then, $\#\operatorname{Ker}(\phi)$ is the left-hand side of \eqref{eq:ZombieDragon}.  
Hence, using that $\operatorname{Im}(\phi) \cong (R/I)/\operatorname{Ker}(\phi)$ by the first isomorphism theorem, 
\begin{equation}
    \#\{k \in R/I : kd \equiv 0 \bmod I\} =\#(R/I)/ \# \operatorname{Im}(\phi) = \#(R/I) /{\#((dR + I)/I)}.  
\end{equation}
Note that $\# (R / I) = \#(R/(dR + I))\#((dR +I) /I)$ by the third isomorphism theorem for rings and recall that $\cN_R(dR + I) = \# R/(dR + I)$ to conclude.  
\end{proof}
\begin{theorem}\label{thm: GeneralQ}
    With the assumptions and notation of Lemma \ref{lem: UniformSymQ}, 
    \begin{equation}
        \textstyle \bbP\bigl(\bG^{\T} \bX \bG \equiv 0 \bmod I \bigr) = \prod_{i=1}^{n} \prod_{j=i}^{n} \cN_{R}(d_id_jR + I)/ \cN_{R}(I).\label{eq:StormySax}
    \end{equation}
\end{theorem}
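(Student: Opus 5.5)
This follows by combining Lemma \ref{lem: UniformSymQ} with Lemma \ref{lem: CardinalitySimplify}.

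Start from the product formula \eqref{eq:VagueCrow}. Each factor there is indexed by a pair $i\leq j$ and has numerator
\[
\#\{k\in R/I: k d_i d_j \equiv 0 \bmod I\}.
\]
Apply Lemma \ref{lem: CardinalitySimplify} with $d \de d_i d_j\in R$. It states that this cardinality equals $\cN_R(d_id_jR + I)$. Substituting this into every factor of \eqref{eq:VagueCrow} gives exactly \eqref{eq:StormySax}.

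The only point needing care is that the $d_i$ in \eqref{eq:JollyInk} are determined only modulo $I$ and only up to units of $R/I$. We therefore check that $d_id_jR+I$ does not depend on these choices.

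First, replacing $d_i$ by $d_i + x$ with $x\in I$ changes $d_id_j$ by an element of $I$. Hence $d_id_jR + I$ is unchanged.

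Second, multiplying $d_i$ by an element $u\in R$ whose reduction is a unit of $R/I$ also leaves the ideal unchanged. Indeed, $ud_id_jR+I\subseteq d_id_jR+I$. Conversely, pick $v\in R$ with $uv\equiv 1 \bmod I$. Then $d_id_j \equiv vud_id_j \bmod I$, which gives the reverse inclusion.

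So the right-hand side of \eqref{eq:StormySax} is well defined, and the equality is immediate from the two lemmas. There is no real obstacle: the substantive work, the reduction to the diagonal matrix $\bD$, was already done in Lemma \ref{lem: UniformSymQ}.
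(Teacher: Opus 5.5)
Your proposal is correct and follows the paper's proof, which likewise just combines Lemma \ref{lem: UniformSymQ} with Lemma \ref{lem: CardinalitySimplify} applied to $d = d_id_j$. Your well-definedness check is correct, but it is not needed for the equality: Lemma \ref{lem: UniformSymQ} already holds for any fixed choice of the $d_i$ in \eqref{eq:JollyInk}.
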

\begin{proof}
    Combine Lemmas \ref{lem: UniformSymQ} and \ref{lem: CardinalitySimplify}.
\end{proof}
\begin{remark}\label{rem: Invariance}
    The invariance-based method also applies to other ensembles. 
    For example, suppose that $\bY$ is a random matrix whose entries are independent \emph{without symmetry constraint} and uniform modulo $I$. 
    Then, for every $\bG_1,\bG_2 \in R^{n\times n}$ with Smith normal forms $d_1^{(1)},\ldots, d_n^{(1)}$ and $d_1^{(2)},\ldots,d_{n}^{(2)}$ over $R/I$,         
    \begin{equation}
        \bbP(\bG_1^{\T}\bY\bG_2 \equiv 0 \bmod I) = \prod_{i=1}^n \prod_{j=1}^n \frac{\cN_R(d_i^{(1)}d_j^{(2)}R + I)}{\cN_R(I)}.\label{eq:SpikySap}
    \end{equation}
    For another, consider a ring automorphism $\tau:R\to R$ with $\tau\circ \tau = \operatorname{Id}_R$. 
    Let $R_{\tau} \de \{r \in R: \tau(r) = r \}$ be the subring of elements that are fixed by $\tau$, and suppose that the ideal $I\subseteq R$ satisfies $\{\tau(i):i\in I \} = I$.
    Denote the matrix found from $\bG \in R^{n\times n}$ by taking the transpose and applying $\tau$ entry-wise by  $
        \bG^\tau \de (\tau(\bG_{j,i}))_{i,j=1}^n.  
    $
    Further, let $\bH$ be a random matrix with $\bH^\tau = \bH$ such that the upper triangular entries are independent with $\bH_{i,j} \bmod I$ uniform on $R/I$ for $i<j$ and $\bH_{i,i} \bmod I\cap R_\tau$ uniform on $R_\tau/(I\cap R_\tau)$. 
    Then, with $d_1,\ldots,d_n\in R$ the Smith normal form of $\bG$ over $R/I$, 
    \begin{equation}
         \bbP\bigl( \bG^{\tau} \bH \bG \equiv 0 \bmod I \bigr) = \Bigl(\prod_{i=1}^n\negsp\frac{\cN_{R_\tau}(\tau(d_i)d_i R_\tau + I\cap R_\tau  )}{\cN_{R_\tau}(I\cap R_{\tau})}\Bigr) \Bigl( \prod_{i=1}^{n-1}\negsp\prod_{j=i+1}^n\negsp \frac{\cN_{R}(\tau(d_i)d_j R + I  )}{\cN_{R}(I
            )} \Bigr).\label{eq:MildMan} 
    \end{equation}
    The proofs of \eqref{eq:SpikySap} and \eqref{eq:MildMan} are minor variations on that of \eqref{eq:StormySax}. 
    (Details can be found in the supplementary material \cite{vanwerde2026exact}.)  
    A concrete example for \eqref{eq:MildMan} would be to take $R=\bbZ[i]$ with $i = \sqrt{-1}$ and let $\tau(z) = \overline{z}$ be complex conjugation. 
    Then, the assumption on $\bH$ means that it is a random Hermitian matrix.
\end{remark}
Thus far, the assumption that $R$ is a Dedekind domain was only used to ensure existence of the Smith normal form over $R/I$, and the orthogonality assumption on $\bQ$ in Theorem \ref{thm: MainExact} has played no role whatsoever. 
The relevance of these assumptions is in the following proofs which establish restrictions on the factors in \eqref{eq:StormySax}.
Recall that the $i$th determinantal ideal $\mathfrak{D}_i\subseteq R$ of $\bG \in R^{n\times n}$ is generated by all $i\times i$ minors.

\begin{lemma}\label{lem: DeterminantalIdealDuality}
    Suppose that $\bG\bG^{\T} = \ell^2 \b1$ for some non-zero $\ell \in R$ and with $\b1$ the identity matrix.
    Then, it holds that $\mathfrak{D}_{n-i} = \ell^{n-2i}\mathfrak{D}_{i}$ for every $i\leq n/2$. 
\end{lemma}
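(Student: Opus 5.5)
Our plan is to use Jacobi's complementary minor identity for the adjugate. Since $\bG\bG^{\T} = \ell^2\b1$, we have $\det(\bG)^2 = \ell^{2n}$. Hence $\det \bG = \pm \ell^n$, because $R$ is a domain and $x^2 = y^2$ forces $x = \pm y$. In particular $\bG$ is invertible over the fraction field, and
\[
\bG^{-1} = \ell^{-2}\bG^{\T}.
\]
Jacobi's identity states that for index sets $S,T$ of size $k$, the $k\times k$ minor of $\bG^{-1}$ on rows $S$ and columns $T$ equals $\pm \det(\bG)^{-1}$ times the complementary $(n-k)\times(n-k)$ minor of $\bG$ on rows $T^c$ and columns $S^c$. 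Applying this to $\bG^{-1} = \ell^{-2}\bG^{\T}$ gives, for every $k$ and all $S,T$,
\[
\ell^{-2k}\det \bG^{\T}_{S,T} = \pm \ell^{-n}\det \bG_{T^c,S^c},
\qquad\text{that is,}\qquad
\det \bG_{T^c,S^c} = \pm \ell^{\,n-2k}\det \bG_{T,S}.
\]
This is exactly an identity of ideals in the fraction field.

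Now take $k=i\le n/2$. As $(S,T)$ ranges over all pairs of $i$-subsets, the pair $(T^c,S^c)$ ranges over all pairs of $(n-i)$-subsets. So the generators of $\mathfrak{D}_{n-i}$ are, up to sign, exactly $\ell^{n-2i}$ times the generators of $\mathfrak{D}_i$. Since $n-2i\ge 0$, the factor $\ell^{n-2i}$ lies in $R$, and the identity holds in $R$ itself. Hence $\mathfrak{D}_{n-i} = \ell^{n-2i}\mathfrak{D}_i$. The case $i=0$ is consistent with the convention $\mathfrak{D}_0 = R$, since it reduces to $\mathfrak{D}_n = \det(\bG)R = \ell^n R$.

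The main obstacle is getting Jacobi's identity right, including the transposition of index sets and the sign. To avoid relying on a citation, I would derive it by hand. Permute rows and columns so that $S$ and $T$ become leading blocks, and write $\bG^{-1}$ in block form. Then compare determinants in
\[
\bG\begin{pmatrix} (\bG^{-1})_{11} & 0\\ (\bG^{-1})_{21} & \b1\end{pmatrix} = \begin{pmatrix}\b1 & \bG_{12}\\ 0 & \bG_{22}\end{pmatrix},
\]
which gives $\det\bG\cdot\det(\bG^{-1})_{11} = \det \bG_{22}$ up to the sign from the permutations. The signs are irrelevant for ideals, so only the index bookkeeping matters. The only other point needing care is that all computations are done in the fraction field of $R$ and then intersected back with $R$. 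This is harmless because every quantity in the final identity already lies in $R$.
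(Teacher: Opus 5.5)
Your proof is correct and follows essentially the same route as the paper. Both apply Jacobi's complementary-minor identity to $\bG^{-1} = \ell^{-2}\bG^{\T}$ and compare the generators of $\mathfrak{D}_{n-i}$ and $\mathfrak{D}_i$; the only difference is that you use $\det\bG = \pm\ell^{n}$ directly on elements (valid in any integral domain), which avoids the paper's appeal to unique factorization of ideals to replace $\det(\bG)\mathfrak{D}_{n-i}$ by $\ell^n\mathfrak{D}_{n-i}$ and then cancel $\ell^n$.
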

\begin{proof}
    We start by working over the fraction field of $R$. 
    Then, the assumption yields that $\bG^{\T} = \ell^2\bG^{-1}$ over the fraction field.       
    The minors of $\bG^{-1}$ relate to those of the original matrix \cite[Eq.(33)]{gantmakher2000theory}. 
    Specifically, if $\cI,\cJ \subseteq \{1,\ldots,n \}$ are sets of indices with cardinality $i$ and $\cI^{C},\cJ^{C}$ are the complementary sets of indices, then with $\bG_{\cI,\cJ}$ the submatrix having row indices $\cI$ and column indices $\cJ$,
    \begin{equation}
        \det\bigl(\bG_{\cI,\cJ}\bigr) =\bigl(-1\bigr)^{\sum_{i\in \cI}i +\sum_{j\in \cJ}j}    \det\bigl(\bG\bigr)  \det\bigl(\bG^{-1}_{\cI^{C},\cJ^{C}}\bigr).
    \end{equation}
    In particular, since $\bG^{\T} = \ell^2\bG^{-1}$ implies that $\ell^{2 \#\cI^{C}}\det(\bG^{-1}_{\cI^{C},\cJ^{C}})= \det(\bG_{\cJ^C,\cI^C})$,   
    \begin{equation}
         \ell^{2(n- i)} \det(\bG_{\cI,\cJ}) =\pm \det\bigl(\bG\bigr) \det(\bG_{\cJ^{C}, \cI^{C}}).\label{eq:LoudMob}
    \end{equation} 
    Both sides of \eqref{eq:LoudMob} are elements of $R$. 
    It now follows that 
    \begin{equation}
        \ell^{2 (n-i)} \mathfrak{D}_{i} = \det(\bG) \mathfrak{D}_{n-i}. \label{eq:RedNab}
    \end{equation}
    The assumption yields that $\det(\bG)^2 = \ell^{2n}$. 
    Hence, since Dedekind domains enjoy a unique factorization property for ideals \cite[Chapter 5, Theorem 11]{zariski2013commutative}, we find that $\det(\bG) \mathfrak{D}_{n-i} = \ell^n \mathfrak{D}_{n-i}$. 
    Using this in \eqref{eq:RedNab} and again using the unique factorization property to cancel powers of $\ell^n$ yields the claim. 
\end{proof}
Recall that the Smith ideals $\mathfrak{d}_i$ in a Dedekind domain $R$ are defined by $\mathfrak{d}_1 = \mathfrak{D}_1$ and $\mathfrak{d}_{i} \mathfrak{D}_{i-1} = \mathfrak{D}_i$ for every $i>1$ with $\mathfrak{D}_{i-1}\neq 0$. 
Note that $\bG \bG^{\T} = \ell^2 \b1$ with $\ell \neq 0$ implies that $\det(\bG)\neq 0$ and hence $\mathfrak{D}_n \neq 0$. 
The latter implies that $\mathfrak{D}_i \neq 0$ for all $i\leq n$ since $\mathfrak{D}_{i+1} \subseteq \mathfrak{D}_i$ for every $i$ by cofactor expansion. 
\begin{lemma}\label{lem: ProductSmith}
    Adopt the assumptions of Lemma \ref{lem: DeterminantalIdealDuality}.
    Then, it holds that $\mathfrak{d}_{i}\mathfrak{d}_{n-i+1} = \ell^{2}R$ for every $i\leq n/2$. 
\end{lemma}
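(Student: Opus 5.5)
We want $\mathfrak{d}_i\mathfrak{d}_{n-i+1}=\ell^2R$ for $i\leq n/2$. The plan is to express both Smith ideals as quotients of determinantal ideals, then use the duality of Lemma \ref{lem: DeterminantalIdealDuality} to pair indices $i$ and $n-i+1$. All ideals $\mathfrak{D}_j$ are nonzero, as noted just before the statement. We adopt the convention $\mathfrak{D}_0=R$, so that $\mathfrak{d}_j\mathfrak{D}_{j-1}=\mathfrak{D}_j$ holds for every $1\le j\le n$. Since nonzero ideals in a Dedekind domain form a cancellative monoid (unique factorization), it suffices to prove the identity after multiplying both sides by a suitable nonzero ideal.

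First, from the definition we have $\mathfrak{d}_i\mathfrak{D}_{i-1}=\mathfrak{D}_i$ and $\mathfrak{d}_{n-i+1}\mathfrak{D}_{n-i}=\mathfrak{D}_{n-i+1}$. Multiplying these gives
\[
\mathfrak{d}_i\mathfrak{d}_{n-i+1}\,\mathfrak{D}_{i-1}\mathfrak{D}_{n-i}=\mathfrak{D}_i\mathfrak{D}_{n-i+1}.
\]
Next we rewrite the right-hand side with the duality. Lemma \ref{lem: DeterminantalIdealDuality} was stated only for $i\le n/2$, but its proof (equation \eqref{eq:RedNab} together with the cancellation step) in fact gives $\ell^{2(n-j)}\mathfrak{D}_j=\ell^n\mathfrak{D}_{n-j}$ for every $0\le j\le n$. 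Equivalently, $\ell^{n-2j}\mathfrak{D}_j=\mathfrak{D}_{n-j}$, read as an identity of fractional ideals, or after clearing powers of $\ell$ when $n-2j<0$. The case $j=0$ is just $\mathfrak{D}_n=\det(\bG)R=\ell^nR$, which follows from $\det(\bG)^2=\ell^{2n}$ and unique factorization.

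Apply this with $j=i-1$, which gives $\mathfrak{D}_{n-i+1}=\ell^{n-2i+2}\mathfrak{D}_{i-1}$; the exponent is nonnegative because $i\le n/2$. Apply it also with $j=i$, which gives $\mathfrak{D}_{n-i}=\ell^{n-2i}\mathfrak{D}_i$. Substituting both into the displayed identity yields
\[
\mathfrak{d}_i\mathfrak{d}_{n-i+1}\,\ell^{n-2i}\mathfrak{D}_{i-1}\mathfrak{D}_i=\ell^{n-2i+2}\mathfrak{D}_{i-1}\mathfrak{D}_i.
\]
Finally, we cancel the nonzero ideal $\ell^{n-2i}\mathfrak{D}_{i-1}\mathfrak{D}_i$. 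This is legitimate because nonzero ideals of a Dedekind domain are invertible, and it leaves $\mathfrak{d}_i\mathfrak{d}_{n-i+1}=\ell^2R$.

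The main obstacle is bookkeeping at the boundary. We must check that the duality holds for $j=i-1$, including the case $j=0$, and that each power of $\ell$ appearing is a genuine integral ideal. Both points are covered by the range $i\le n/2$ and the argument for $\mathfrak{D}_n$ above.
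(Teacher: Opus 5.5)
Your proof is correct and follows essentially the same route as the paper. Both arguments apply Lemma \ref{lem: DeterminantalIdealDuality} at indices $i$ and $i-1$ and cancel in the group of nonzero fractional ideals of the Dedekind domain; the paper writes this as a chain of fractional-ideal equalities, whereas you multiply the two defining relations and then cancel. Your explicit treatment of the boundary case $i=1$, via $\mathfrak{D}_0=R$ and $\mathfrak{D}_n=\ell^nR$, is a small improvement, since the paper uses the duality at index $i-1=0$ without comment.
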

\begin{proof}
    Using Lemma \ref{lem: DeterminantalIdealDuality} and the calculation rules for fractional ideals in a Dedekind domain that follow from its unique factorization property (\ie the Abelian group structure of the fractional ideals \cite[Chapter 5, Theorem 11]{zariski2013commutative}),
    \begin{equation}
        \mathfrak{d}_{n-i+1} = \mathfrak{D}_{n-i}^{-1}\mathfrak{D}_{n-i+1} = \Bigl(\bigl(\ell^{n-2i} \mathfrak{D}_i\bigr)^{-1}\bigl(\ell^{n-2(i-1)} \mathfrak{D}_{i-1}\bigr)\Bigr) =  \ell^{2 }\mathfrak{D}_{i}^{-1}\mathfrak{D}_{i-1}.   
    \end{equation}
    Using that $\mathfrak{d}_i = \mathfrak{D}_{i-1}^{-1}\mathfrak{D}_{i}$ now yields the desired result.  
\end{proof}
\begin{lemma}\label{lem: SmithIdealSmithNormal}
    Consider a nonzero ideal $I\subseteq R$ as well as an arbitrary matrix $\bG\in R^{n\times n}$ with $\det(\bG)\neq 0$. 
    Let $\mathfrak{d}_1,\ldots,\mathfrak{d}_n$ be the Smith ideals over $R$ and let  $d_1,\ldots,d_n \in R$ be elements associated to the Smith normal form over $R/I$.  
    Then, for every $i\leq n$, 
    \begin{equation}
        d_i R + I = \mathfrak{d}_i + I. \label{eq:ColdQuip}
    \end{equation}
\end{lemma}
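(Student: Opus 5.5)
Our plan is to compare determinantal ideals on both sides, working modulo $I$, and then to pass from determinantal ideals to Smith ideals by localizing. The Smith ideals are defined multiplicatively, and ``$+I$'' does not commute with ideal division. We therefore first reduce to a local statement. Since $R/I$ is a finite product of local rings $R/\mathfrak{p}^{e}$ over the primes $\mathfrak{p}$ dividing $I$, it suffices, by CRT, to check \eqref{eq:ColdQuip} after localizing at each such $\mathfrak{p}$. The localization $R_{\mathfrak{p}}$ is a DVR, hence a PID, so $\bG$ has a genuine Smith normal form over $R_{\mathfrak{p}}$: $\bG = \bU'\operatorname{diag}(\delta_1,\ldots,\delta_n)\bV'$ with $\bU',\bV'\in\operatorname{GL}_n(R_{\mathfrak{p}})$ and $\delta_i=\pi^{a_i}$ with $a_1\le\cdots\le a_n$, where $\pi$ is a uniformizer. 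These $a_i$ are finite because $\det\bG\neq0$.

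The first step identifies the local Smith ideals. Determinantal ideals are invariant under multiplication by invertible matrices (Cauchy--Binet), and localization commutes with forming ideals generated by minors. Hence $(\mathfrak{D}_i)_{\mathfrak{p}}=\pi^{a_1+\cdots+a_i}R_{\mathfrak{p}}$. Because localization respects products of nonzero ideals, $(\mathfrak{d}_i)_{\mathfrak{p}}=\pi^{a_i}R_{\mathfrak{p}}$.

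The second step identifies the local reductions $d_i$. Reducing the local factorization modulo $I_{\mathfrak{p}}=\pi^eR_{\mathfrak{p}}$ gives a Smith normal form of $\bG$ over $R_{\mathfrak{p}}/\pi^e$ with diagonal $\pi^{\min(a_i,e)}$, since entries divisible by $\pi^e$ become $0$. The decomposition \eqref{eq:JollyInk} over $R/I$ projects, via $R/I\to R_{\mathfrak{p}}/\pi^eR_{\mathfrak{p}}$, to another Smith normal form of $\bG$ over this local quotient, with diagonal the images of $d_i$ and still satisfying the divisibility chain. Uniqueness of the Smith normal form up to units (cited before \eqref{eq:JollyInk}, applied to the local principal ideal ring) then gives $d_iR_{\mathfrak{p}}+\pi^eR_{\mathfrak{p}}=\pi^{\min(a_i,e)}R_{\mathfrak{p}}=(\mathfrak{d}_i)_{\mathfrak{p}}+I_{\mathfrak{p}}$.

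To conclude, note that two ideals $J_1,J_2$ containing $I$ are equal if their localizations agree at every prime. At primes not dividing $I$, both localizations are the unit ideal. At primes dividing $I$, they agree by the previous step. Hence $d_iR+I=\mathfrak{d}_i+I$.

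The main obstacle is the uniqueness claim in the second step. The divisibility chain for the $d_i$ is stated only modulo $I$, so we must make sure the projected form is still a valid Smith form locally. We also must check that uniqueness over $R_{\mathfrak{p}}/\pi^e$ determines the ideals $\pi^{\min(a_i,e)}$. If citing uniqueness feels delicate, we would instead compare determinantal ideals over the local quotient: these are invariant under invertible transformations, and for a diagonal matrix with chain $\pi^{b_1}\mid\cdots\mid\pi^{b_n}$, where $b_i\le e$, they equal $\pi^{\min(b_1+\cdots+b_i,e)}$. From these values one recovers the $b_i$ inductively, up to the point where the exponent reaches $e$, which is exactly the relevant information.
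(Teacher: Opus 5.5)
Your argument is correct, but it takes a different route from the paper. The paper works globally. In the appendix it proves $\operatorname{coker}(\bG)\cong\oplus_i R/\mathfrak{d}_i$ with $\mathfrak{d}_{i+1}\subseteq\mathfrak{d}_i$, using the structure theorem over the principal ideal ring $R/\det(\bG)R$ and a Fitting-ideal computation. It then tensors with $R/I$ and compares the result with $\operatorname{coker}(\bG_I)\cong\oplus_i R/(d_iR+I)$, using uniqueness of decompositions of modules over $R/I$.

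You instead localize at the primes dividing $I$. There $R_{\mathfrak{p}}$ is a DVR, so the classical Smith form exists, and Cauchy--Binet immediately gives $(\mathfrak{d}_i)_{\mathfrak{p}}=\pi^{a_i}R_{\mathfrak{p}}$. This avoids Fitting ideals and the module theory over non-domain principal ideal rings entirely. As a bonus, it yields the chain $\mathfrak{d}_{i+1}\subseteq\mathfrak{d}_i$, which the paper needs later in Corollary \ref{cor: MainDedekind}, since $a_i\le a_{i+1}$ at every prime. The cost is local--global bookkeeping, which you handle correctly: the chain $d_{i+1}R+I\subseteq d_iR+I$ survives the projection $R/I\to R_{\mathfrak{p}}/\pi^eR_{\mathfrak{p}}$, and both ideals contain $I$, so agreement at all primes gives equality. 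Both proofs ultimately rest on the same kind of uniqueness statement over a principal ideal ring. The paper's version additionally produces the canonical cokernel description as a reusable intermediate result.

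One caveat: drop your fallback. Determinantal ideals over $R_{\mathfrak{p}}/\pi^eR_{\mathfrak{p}}$ do not recover the truncated exponents $\min(b_i,e)$. For instance, with $e=2$, the matrices $\operatorname{diag}(\pi,\pi)$ and $\operatorname{diag}(\pi,0)$ both have determinantal ideals $(\pi)$ and $(0)$, yet their exponent sequences $(1,1)$ and $(1,2)$ differ. This is exactly the pitfall flagged in the paper's footnote. Uniqueness of the Smith form over the local quotient is therefore genuinely needed; equivalently, you can use uniqueness of the cokernel decomposition, e.g.\ via the dimensions of $\pi^jM/\pi^{j+1}M$.
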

\begin{proof} 
    The \emph{cokernel} of $\bG$ is the quotient module $\coker(\bG) \de R^n / \bG R^n$.
    We rely on the fact that Smith ideals are equivalent to the structure of the cokernel when $\det(\bG)\neq 0$.\footnote{While it can be deduced directly from \eqref{eq:JollyInk} that $\mathfrak{D}_i +I = (\prod_{j\leq i}d_j)R + I$ for all $i$, this does not directly imply that $\mathfrak{d}_i + I = d_i R + I$ because $R/I$ may no longer have the unique factorization property for ideals. 
    This is why it is helpful to adopt the canonical perspective of cokernels instead of arguing directly with determinantal ideals.} 
    That is, the Smith ideals satisfy $\mathfrak{d}_{i+1} \subseteq \mathfrak{d}_i$ for every $i$ and they are the unique sequence of length $n$ with this inclusion property such that 
    \begin{equation}
        \coker(\bG) \cong \oplus_{i=1}^n  R/\mathfrak{d}_i. \label{eq:WittyBird}
    \end{equation}
    This relation is classical for principal ideal rings and is often used as the definition of the Smith ideals. 
    The case when $R$ is a Dedekind domain and the $\mathfrak{d}_i$ are defined through determinantal ideals can be deduced from the statement over principal ideal rings; see Appendix \ref{apx: DetailsSmith} for the argument. 

    Denote $\bG_I\in (R/I)^{n\times n}$ for the reduction $\bG \bmod I$.
    Then, a direct computation using \eqref{eq:JollyInk} shows that $\coker(\bG_I) \de (R/I)^n/\bG_I (R/I)^n$ satisfies  
    \begin{equation}
        \coker(\bG_I) \cong \oplus_{i=1}^n  R/(d_i R + I),\label{eq:WobblyYarn}
    \end{equation}
    where we recall from the discussion preceding \eqref{eq:JollyInk} that $d_{i+1}R + I \subseteq d_i R + I$ for every $i$. 
    On the other hand, $\coker(\bG_I) \cong \coker(\bG)\otimes_R (R/I)$ so  \eqref{eq:WittyBird} implies that 
    \begin{equation}
        \coker(\bG_I) \cong \oplus_{i=1}^n R/ (\mathfrak{d}_i + I),  \label{eq:LazyBud}
    \end{equation}
    where it follows from the inclusion $\mathfrak{d}_{i+1}\subseteq \mathfrak{d}_i$ discussed preceding \eqref{eq:WittyBird} (proved in Appendix \ref{apx: DetailsSmith}) that $\mathfrak{d}_{i+1} + I \subseteq \mathfrak{d}_{i} + I$ for every $i$. 
    Combining \eqref{eq:WobblyYarn} and \eqref{eq:LazyBud} with the uniqueness of Smith decompositions of finitely generated modules over the principal ideal ring $R/I$ \cite[Lemma 15.13]{brown1993matrices} now yields \eqref{eq:ColdQuip}. 
\end{proof}

\begin{corollary}\label{cor: MainDedekind}
    Suppose that $\bG\bG^{\T} = \ell^2 \b1$ for some non-zero $\ell \in R$ and consider a random symmetric $n\times n$ matrix $\bX\in R^{n\times n}$ whose upper-triangular entries $\{\bX_{i,j}:i\leq j \}$ have independent and uniformly distributed reductions in $R/\ell^2 R$. 
    Then, 
    \begin{equation}
        \textstyle \bbP\bigl(\bG^{\T} \bX \bG \equiv 0 \bmod  \ell^2R^{n\times n} \bigr) = \prod_{i=1}^{\lfloor n/2 \rfloor} \prod_{j=i}^{n-i} \cN_R(\mathfrak{d}_i \mathfrak{d}_j)/\cN_R(\ell^2 R).\label{eq:GladPizza}
    \end{equation}
\end{corollary}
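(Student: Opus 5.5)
Take $I = \ell^2 R$ in Theorem \ref{thm: GeneralQ}. Lemma \ref{lem: SmithIdealSmithNormal} applies since $\det(\bG)\neq 0$, and the point is that Lemma \ref{lem: ProductSmith} makes most of the factors trivial.

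First, rewrite each factor. Theorem \ref{thm: GeneralQ} gives the probability as $\prod_{1\le i\le j\le n}\cN_R(d_id_jR+\ell^2R)/\cN_R(\ell^2R)$. We want to replace $d_id_jR+\ell^2R$ by $\mathfrak{d}_i\mathfrak{d}_j+\ell^2R$. By Lemma \ref{lem: SmithIdealSmithNormal}, $d_iR+\ell^2R=\mathfrak{d}_i+\ell^2R$. Multiplying ideals gives
\[
(d_iR+\ell^2R)(d_jR+\ell^2R)=d_id_jR+\ell^2(d_iR+d_jR+\ell^2R),
\]
and adding $\ell^2R$ to both sides yields $d_id_jR+\ell^2R$. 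The same computation applies to $\mathfrak{d}_i\mathfrak{d}_j$. Hence
\[
d_id_jR+\ell^2R=(\mathfrak{d}_i+\ell^2R)(\mathfrak{d}_j+\ell^2R)+\ell^2R=\mathfrak{d}_i\mathfrak{d}_j+\ell^2R .
\]

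Second, show $\mathfrak{d}_i\mathfrak{d}_j\supseteq \ell^2R$, so that the sum with $\ell^2R$ can be dropped. Lemma \ref{lem: ProductSmith} gives $\mathfrak{d}_k\mathfrak{d}_{n-k+1}=\ell^2R$ for $k\le n/2$. We also need the chain $\mathfrak{d}_{k+1}\subseteq\mathfrak{d}_k$, which is used in the proof of Lemma \ref{lem: SmithIdealSmithNormal} (Appendix \ref{apx: DetailsSmith}). If $n$ is odd, the middle index $m=(n+1)/2$ satisfies $\mathfrak{d}_m^2\supseteq\ell^2R$: this holds because $\mathfrak{d}_m\supseteq\mathfrak{d}_{m+1}$ and $\mathfrak{d}_m\mathfrak{d}_{m+1}$ is comparable to $\ell^2 R$. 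More cleanly, in a Dedekind domain one compares prime exponents, $v_{\mathfrak p}(\mathfrak d_k)+v_{\mathfrak p}(\mathfrak d_{n+1-k})=2v_{\mathfrak p}(\ell)$ with $v_{\mathfrak p}(\mathfrak d_k)$ nondecreasing in $k$.

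Now take $i\le j$ with $i+j\le n+1$. Then $j\le n+1-i$, so $\mathfrak{d}_j\supseteq\mathfrak{d}_{n+1-i}$ and $\mathfrak{d}_i\mathfrak{d}_j\supseteq\ell^2R$ (for $i\le n/2$; the odd middle case $i=j=m$ is handled by the exponent argument above). In that case the factor equals $\cN_R(\mathfrak{d}_i\mathfrak{d}_j)/\cN_R(\ell^2R)$. If instead $i+j\ge n+2$, then $j\ge n+2-i$ gives $\mathfrak{d}_j\subseteq\mathfrak{d}_{n+1-i}$, and the chain gives $\mathfrak{d}_i\subseteq\mathfrak d_{n+1-j}$. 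With $i'=n+1-j\le n/2$ and $\mathfrak d_i\mathfrak d_j\subseteq\mathfrak d_{i'}\mathfrak d_{n+1-i'}=\ell^2R$, the factor becomes $\cN_R(\ell^2R)/\cN_R(\ell^2R)=1$.

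Third, count the surviving pairs. These are $i\le j\le n+1-i$, which forces $i\le (n+1)/2$. The pairs with $j=n+1-i$ give the factor $\cN_R(\ell^2R)/\cN_R(\ell^2R)=1$, as does the odd middle pair $i=j=(n+1)/2$ since $\mathfrak d_m^2=\ell^2R$ by Lemma \ref{lem: ProductSmith}-type exponent symmetry. What remains is exactly $1\le i\le\lfloor n/2\rfloor$, $i\le j\le n-i$, which matches \eqref{eq:GladPizza}.

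The main obstacle is justifying the valuation bookkeeping, namely monotonicity together with the complementary sum $2v_{\mathfrak p}(\ell)$, including the middle index, which Lemma \ref{lem: ProductSmith} does not literally cover. I would handle the middle index through the determinant: $\prod_k\mathfrak d_k=\mathfrak D_n=\det(\bG)R=\ell^nR$. Combined with the pairings $\mathfrak d_k\mathfrak d_{n+1-k}=\ell^2R$, this forces $\mathfrak d_m=\ell R$.
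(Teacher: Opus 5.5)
Your proof is correct and follows the same route as the paper: apply Theorem \ref{thm: GeneralQ} with $I=\ell^2R$, pass to Smith ideals via Lemma \ref{lem: SmithIdealSmithNormal}, and use Lemma \ref{lem: ProductSmith} together with the chain $\mathfrak{d}_{k+1}\subseteq\mathfrak{d}_k$ to show that exactly the factors with $j\geq n-i+1$ are trivial. You are more explicit than the paper on two points it passes over: the step $d_id_jR+\ell^2R=\mathfrak{d}_i\mathfrak{d}_j+\ell^2R$, and the middle index for odd $n$. For the latter, your determinant argument ($\prod_k\mathfrak{d}_k=\ell^nR$ forcing $\mathfrak{d}_m=\ell R$, equivalently Lemma \ref{lem: DeterminantalIdealDuality} with $i=(n-1)/2$) is the right fix, whereas your first ``comparable to $\ell^2R$'' remark would not suffice on its own.
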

\begin{proof}
    Recall that $\mathfrak{d}_{i+1} \subseteq \mathfrak{d}_i$ for every $i$. 
    (This was remarked preceding \eqref{eq:WittyBird} and is proved in Appendix \ref{apx: DetailsSmith}.) 
    In particular, Lemma \ref{lem: ProductSmith} hence yields that $\ell^2 R \subseteq \mathfrak{d}_i \mathfrak{d}_{j}$ for every $j \leq n-i+1$ and $\mathfrak{d}_i \mathfrak{d}_{j}\subseteq \ell^2 R$ for every $j\geq n-i+1$. 
    Consequently, using \eqref{eq:ColdQuip} with $I = \ell^2 R$, it holds with $d_1,\ldots,d_n$ as in \eqref{eq:JollyInk} that  
    \begin{equation}
        \cN_R(d_i d_j + \ell^2R) = \cN_R(\mathfrak{d}_i \mathfrak{d}_j + \ell^2R) =
        \begin{cases}
            \cN_R(\ell^2 R)& \textnormal{ if }j\geq n-i+1,\\  
            \cN_R(\mathfrak{d}_i \mathfrak{d}_j) & \textnormal{ if } j\leq n-i. 
        \end{cases}
    \end{equation}
    Hence, \eqref{eq:GladPizza} follows from \eqref{eq:StormySax} since all factors $j> n-i$ equal one.  
\end{proof}
\begin{proof}[Proof of \texorpdfstring{Theorem \ref{thm: GeneralQ}}{Theorem}]
    This is immediate from Corollary \ref{cor: MainDedekind} since it holds that $\bQ^{\T} \bX \bQ \in \sO_K^{n\times n}$ for an orthogonal matrix $\bQ$ of level $\ell$ if and only if $\bG^{\T}\bX \bG \in \ell^2 \sO_K$ for $\bG \de \ell \bQ$. 
    Note that the latter matrix satisfies $\bG^{\T}\bG = \ell^2 \b1$. 
\end{proof}

\section{Proofs for the applications to rational cospectrality}\label{sec: ProofApplication}
\subsection{Proof of \texorpdfstring{Theorem \ref{thm: Switching}}{Theorem}}\label{sec: ProofThm2}
\begin{proposition}\label{prop: Combinatorics2}
    It holds that $\bbO_2(\ell , \bbQ) \neq \emptyset$ if and only if the factorization of $\ell$ into powers of distinct primes is of the form \eqref{eq:KnownToy}. 
    In that case, $\#\bbO_2(\ell , \bbQ) = 2^{r+3}$ with $r$ the number of distinct prime factors of $\ell$. 
\end{proposition}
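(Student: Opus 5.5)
I will reduce the count of level-$\ell$ rational orthogonal $2\times 2$ matrices to counting primitive representations $a^2+b^2=\ell^2$. First, every $\bQ\in\bbO_2(\ell,\bbQ)$ has first column $(a,b)^{\T}/\ell$ with $a,b\in\bbZ$ and $a^2+b^2=\ell^2$. Orthogonality then forces the second column to be $\pm(-b,a)^{\T}/\ell$. So $\bQ$ is a rotation or a reflection,
\begin{equation*}
    \bQ = \frac{1}{\ell}\begin{pmatrix} a & -\epsilon b\\ b & \epsilon a\end{pmatrix},\qquad \epsilon\in\{\pm1\}.
\end{equation*}
The entries of $\ell\bQ$ are $\pm a,\pm b$, so the level of $\bQ$ is exactly $\ell/\gcd(a,b,\ell)$. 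Since $\gcd(a,b)^2\mid \ell^2$, we have $\gcd(a,b)\mid\ell$, and the level equals $\ell$ if and only if $\gcd(a,b)=1$. Hence
\[
\#\bbO_2(\ell,\bbQ)=2\cdot \#\{(a,b)\in\bbZ^2: a^2+b^2=\ell^2,\ \gcd(a,b)=1\}.
\]
This bijection (the pair $(a,b)$ together with $\epsilon$ determines $\bQ$, and conversely) is the routine first step.

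Next I count primitive representations of $\ell^2$ as a sum of two squares, for which I plan to use Gaussian integers. A primitive solution corresponds to $z=a+bi\in\bbZ[i]$ with $z\bar z=\ell^2$ and $z$ not divisible by any rational prime. Write $\ell=2^{e}\prod p_i^{k_i}\prod q_j^{m_j}$ with $p_i\equiv1$ and $q_j\equiv 3\bmod 4$. Then:
\begin{itemize}
    \item If some $q_j\mid\ell$, then $q_j$ is inert, so $q_j\mid z$, contradicting primitivity.
    \item If $2\mid\ell$, then $a^2+b^2\equiv0\bmod4$ forces $a,b$ both even, again a contradiction.
    \item If $\ell=\prod p_i^{k_i}$ with $p_i=\pi_i\bar\pi_i$, primitivity forces $z$ to contain, for each $i$, only one of $\pi_i,\bar\pi_i$. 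So $z=u\prod \pi_i'^{2k_i}$ with $\pi_i'\in\{\pi_i,\bar\pi_i\}$ and $u$ one of $4$ units.
\end{itemize}
This gives exactly $4\cdot 2^r$ primitive representations, matching \cite[Eq.(1.6)]{cooper2007number}. I can cite that result directly, but I would include the sketch above for completeness. For the converse direction, each such $z$ is indeed primitive, because a rational prime dividing $z$ would have to be some $p_i$, and then both $\pi_i$ and $\bar\pi_i$ would divide $z$.

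Combining the two steps gives $\#\bbO_2(\ell,\bbQ)=2\cdot 4\cdot 2^r=2^{r+3}$ when $\ell$ has the form \eqref{eq:KnownToy}, and $0$ otherwise. This proves both claims of the proposition. I expect the main obstacle to be the primitivity bookkeeping: checking that the level is exactly $\ell$ if and only if $\gcd(a,b)=1$, including the case $\ell=1$, where $r=0$ gives $8$ signed permutations, as a consistency check. I also need to handle ramified $2$ correctly.
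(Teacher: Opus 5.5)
Your proposal is correct and follows essentially the same route as the paper: a one-to-two correspondence (rotation versus reflection) between $\bbO_2(\ell,\bbQ)$ and primitive solutions of $a^2+b^2=\ell^2$, followed by the classical count $4\cdot 2^r$. The differences are only in detail. You explicitly check that the level equals $\ell$ exactly when $\gcd(a,b)=1$, using $\gcd(a,b)\mid \ell$, which the paper leaves implicit. You also derive the count via Gaussian integers, whereas the paper simply cites \cite{cooper2007number}.
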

\begin{proof}
    Consider the primitive decompositions of $\ell^2$ as a sum of two squares: 
    \begin{equation}
        \sR_2^{p}(\ell^2) \de \bigl\{(a,b)\in \bbZ^2 : a^2 + b^2 = \ell^2,\ \operatorname{gcd}(a,b) = 1  \bigr\}.\label{eq:CleverJet}
    \end{equation}
    Then, $2\# \sR_2^p(\ell^2) = \#\bbO_2(\ell , \bbQ)$ since there is a one-to-two correspondence. 
    Indeed, for every $(a,b) \in \sR_2^{p}(\ell^2)$ there are two associated matrices in $\bbO_2(\ell , \bbQ)$ given by  
    \begin{equation}
    \frac{1}{\ell}\begin{pmatrix}
        a &b\\ 
        -b & a
    \end{pmatrix} \qquad  \textnormal{ and }\qquad  \frac{1}{\ell}\begin{pmatrix}
        a &b\\ 
        b &-a
    \end{pmatrix}. 
    \end{equation}
    Specifically, these parametrize the matrices with determinant $1$ and $-1$, respectively.
    The claim now follows from the classical formula for $\#\sR_2^p(\ell^2)$ with $\ell\geq 2$ which yields that this number is zero if $\ell$ has prime factors that are not congruent to $1$ modulo $4$, and equal to $4\times 2^r$ otherwise; see \eg \cite[Eq.(1.6)]{cooper2007number}.  
\end{proof}
\begin{proof}[Proof of \texorpdfstring{Theorem \ref{thm: Switching}}{Theorem}]
    It holds for any $\bQ \in \bbO_2(\ell , \bbQ)$ that the Smith normal form of $\ell\bQ$ is $(d_1, d_2) = (1,\ell^2)$ as is readily checked by considering the determinant and the ideal generated by the entries.
    Hence, decomposing $N_2(\ell)$ as a sum of indicator variables and using the linearity of expectation with Corollary \ref{cor: RationalConjugation}, 
    \begin{equation}
        \bbE[N_2(\ell)] =   \sum_{\bQ \in \bbO_2(\ell , \bbQ)}\bbP (\bQ^\T \bX \bQ  \in \bbZ^{2\times 2})  =  \ell^{-2}\#\bbO_2(\ell , \bbQ). 
    \end{equation}
    The desired result is now immediate from Proposition \ref{prop: Combinatorics2}. 
\end{proof}

\subsection{Proof of \texorpdfstring{Theorem \ref{thm: 3Switching}}{Theorem}}\label{sec: ProofThm3}
We rely on the Euler--Rodrigues parametrization of the special orthogonal group:
\begin{theorem}[Pall {\cite{pall1940rational}}; Cremona {\cite{Galuten01101987}}]\label{thm: EulerRodrigues}
    For every $3\times 3$ rational orthogonal matrix $\bQ\in \bbQ^{3\times 3}$ with $\det(\bQ) = 1$ there exist $a,b,c,d \in \bbZ$ such that   
    \begin{equation}
        \bQ = \frac{1}{a^2 + b^2 + c^2 + d^2}
        \begin{pmatrix}
        a^2 + b^2 -c^2 - d^2& 2(bc - ad) & 2(bd + ac)\\ 
    2(ad + bc) & a^2 - b^2 + c^2 - d^2 & 2(cd - ab)\\ 
    2(bd - ac) & 2(ab + cd) & a^2 -b^2 - c^2 + d^2 \end{pmatrix}\nonumber
    \end{equation}
    Moreover, the vector $v = (a,b,c,d)$ is uniquely determined up to scalar multiplication. 
    In particular, if $\operatorname{gcd}(a,b,c,d)=1$ then the only other vector $v'\in \bbZ^4$ with coprime coordinates that gives the same orthogonal matrix is $v' = - v$.
\end{theorem}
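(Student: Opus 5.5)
We would prove this by the quaternion route, first over $\bbR$ and then descending to $\bbQ$. For $v=(a,b,c,d)\in\bbR^4\setminus\{0\}$, write $R(v)$ for the displayed matrix. Then $R(v)$ is the matrix of $x\mapsto qxq^{-1}$ on the pure quaternions, where $q=a+bi+cj+dk$. This gives the standard covering homomorphism from the unit quaternions onto $\mathrm{SO}(3)$. We take as known that $v\mapsto R(v)$ maps onto $\mathrm{SO}(3,\bbR)$ and has kernel $\pm1$ on unit quaternions. Hence for the given rational $\bQ$ with $\det\bQ=1$ there is a real unit vector $v$ with $R(v)=\bQ$.

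The key step is to recover the rank-one matrix $vv^{\T}$ linearly from $\bQ$. With $|v|=1$, reading off the trace and the antisymmetric and symmetric parts of the displayed matrix gives
\begin{equation*}
4a^2 = 1+\operatorname{tr}\bQ,\quad 4ab = \bQ_{3,2}-\bQ_{2,3},\quad 4ac=\bQ_{1,3}-\bQ_{3,1},\quad 4ad = \bQ_{2,1}-\bQ_{1,2},
\end{equation*}
\begin{equation*}
4b^2 = 1+\bQ_{1,1}-\bQ_{2,2}-\bQ_{3,3},\quad 4bc = \bQ_{1,2}+\bQ_{2,1},
\end{equation*}
and similarly for the remaining products. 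Thus every entry of $vv^{\T}$ is a rational-coefficient linear expression in the entries of $\bQ$ and $1$, so $vv^{\T}\in\bbQ^{4\times4}$. Checking these identities is a routine expansion, which we would carry out using $a^2+b^2+c^2+d^2=1$.

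Descent to rationals: pick an index $i$ with $v_i\neq 0$. Then $w:=vv^{\T}e_i\in\bbQ^4$ and $v=w/v_i$, so $v$ is a real multiple of the rational vector $w$. Each entry of $R(v)$ is a ratio of quadratic forms, hence homogeneous of degree $0$ in $v$. Therefore $\bQ=R(w)$, and clearing denominators gives an integral $(a,b,c,d)$. Uniqueness follows the same way. If $R(v)=R(v')$, normalize both to unit length. The formulas above show that $vv^{\T}$ and $v'v'^{\T}$ are equal, being determined by $\bQ$, so $v'=\pm v$ after normalization. Hence, before normalization, $v'$ is a scalar multiple of $v$. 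If both are integral with coprime coordinates, the scalar must be rational and then $\pm1$.

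The main obstacle is the surjectivity and kernel statement for the quaternion cover. If we do not cite it, we would prove it as follows. Every rotation is a rotation by an angle $\theta$ about a unit axis $u$, and such a rotation equals $R(\cos(\theta/2),\sin(\theta/2)u)$. The kernel claim follows from the rank-one recovery above: $R(v)$ determines $vv^{\T}$, and $vv^{\T}$ determines $v$ up to sign. So the linear identities of the second paragraph do the real work in both the existence and the uniqueness parts.
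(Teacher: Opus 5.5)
Your proof is correct, and it follows the quaternion correspondence that the paper points to. The paper gives no argument of its own here; it defers to Pall and Cremona. Recovering $vv^{\T}$ linearly from the entries of $\bQ$ (with $|v|=1$) is a clean way to get both the descent to $\bbQ$ and uniqueness up to real scalars, which you then correctly sharpen to $\pm1$ for coprime integer vectors.
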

The proof of Theorem \ref{thm: EulerRodrigues} relies on the correspondence between special orthogonal matrices in dimension $3$ and quaternions; see \cite[p.755]{pall1940rational}. 

The denominator $a^2 + b^2 + c^2 + d^2$ is not necessarily the level of the matrix $\bQ$ in Theorem \ref{thm: EulerRodrigues} since it can occur that there are shared divisors with the other entries. 
The following two lemmas serve to overcome this difficulty: 

\begin{lemma}[Pall {\cite{pall1940rational}}]\label{lem: OddLevel}
    It holds that $\bbO_{3}(\ell,\bbQ) = \emptyset$ if $\ell$ is even. 
\end{lemma}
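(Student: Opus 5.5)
We need to show no rational $3\times 3$ orthogonal matrix has even level. Suppose $\bQ\in\bbO_3(\ell,\bbQ)$ with $\ell$ even, and set $\bG \de \ell\bQ\in\bbZ^{3\times 3}$. Then $\bG^{\T}\bG = \ell^2\b1$. By minimality of the level, the entries of $\bG$ have $\gcd$ coprime to $\ell$; in particular, since $2\mid \ell$, some entry of $\bG$ is odd. We aim for a contradiction by working modulo $4$, with no need for Theorem \ref{thm: EulerRodrigues}.

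\textbf{Column sums of squares.} Each column $g=(x,y,z)$ of $\bG$ satisfies $x^2+y^2+z^2=\ell^2\equiv 0 \bmod 4$. Squares are $0$ or $1$ modulo $4$, so the number of odd coordinates in $g$ is divisible by $4$. With only three coordinates, that number must be $0$, so every column of $\bG$ has all entries even. Hence every entry of $\bG$ is even.

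\textbf{Contradiction.} All entries of $\bG$ being even, together with $2\mid\ell$, means $(\ell/2)\bQ = \bG/2\in\bbZ^{3\times3}$. This contradicts the minimality of $\ell$ in Definition \ref{def: Level}. Therefore $\bbO_3(\ell,\bbQ)=\emptyset$ for even $\ell$.

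The only step that needs care is noting that $\ell$ even forces $\ell^2\equiv 0\bmod 4$, not merely $\bmod 2$. The mod-$4$ count of odd squares among three coordinates is what makes dimension three special: in dimension four, $1^2+1^2+1^2+1^2=4$ shows the argument fails, as it must.
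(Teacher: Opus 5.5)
Your proof is correct and is essentially the paper's argument: both reduce the column equation $x^2+y^2+z^2=\ell^2\equiv 0 \bmod 4$ to force every entry of $\ell\bQ$ to be even, which contradicts the minimality of the level. Your conclusion that the entries are divisible by $2$ is the accurate one; the paper's claim that the only solution is $(0,0,0) \bmod 4$, so that the entries are divisible by $4$, is too strong (e.g.\ $(2,0,0)$ is a solution), though this does not affect the conclusion.
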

\begin{proof}
    Suppose to the contrary that there exists $\bQ \in \bbO_{3}(\ell,\bbQ)$. 
    The columns of $G \de  \ell \bQ$ are then vectors in $\bbZ^3$ of norm $\ell$. 
    However, the equation $x^2 + y^2 + z^2 \equiv 0 \bmod 4$ only admits the trivial solution $(x,y,z) \equiv (0,0,0) \bmod 4$.   
    (This can be checked using that the only squares in $\bbZ/4\bbZ$ are $0$ and $1$.)
    Thus, if $\ell$ is even, then the entries of $G$ would be divisible by $4$, and hence not coprime to $\ell$ which contradicts the definition of the level. 
\end{proof}
\begin{lemma}[Pall {\cite{pall1940rational}}]\label{lem: gcd4}
    Suppose that $\operatorname{gcd}(a,b,c,d) = 1$. 
    Then, there exists $g\in \{1,2,4 \}$ such that the level $\ell$ of the matrix $\bQ$ in Theorem \ref{thm: EulerRodrigues} is
    $
        \ell = (a^2 + b^2 + c^2 + d^2)/ g. 
    $
\end{lemma}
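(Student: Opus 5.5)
Write $m \de a^2+b^2+c^2+d^2$ and let $\bG \de m\bQ$ be the integer matrix of Theorem \ref{thm: EulerRodrigues}. The level of $\bQ$ is $\ell = m/g$ with $g \de \gcd(m, \text{entries of }\bG)$. So it suffices to show that $g$ divides $4$; indeed $g\in\{1,2,4\}$ then follows automatically. I would argue prime by prime.

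\textbf{Odd primes.} Suppose an odd prime $p$ divides both $m$ and every entry of $\bG$. The diagonal entries of $\bG$ together with $m$ give the four linear combinations $a^2+b^2-c^2-d^2$, $a^2-b^2+c^2-d^2$, $a^2-b^2-c^2+d^2$ and $a^2+b^2+c^2+d^2$. Adding and subtracting them and dividing by $4$ (invertible mod $p$) yields $a^2\equiv b^2\equiv c^2\equiv d^2\equiv 0 \bmod p$. Hence $p$ divides $a,b,c,d$, contradicting $\gcd(a,b,c,d)=1$. So $g$ is a power of $2$.

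\textbf{The prime $2$.} The same sums show that $4a^2$ is a $\bbZ$-combination of $m$ and the diagonal of $\bG$, and likewise $4b^2$, $4c^2$, $4d^2$. So if $2^k\mid g$, then $2^k$ divides $4a^2,4b^2,4c^2,4d^2$. Suppose $k\ge 3$. Then $2$ divides $a^2,\dots,d^2$, so all of $a,b,c,d$ are even, again contradicting coprimality. Hence $k\le 2$ and $g\mid 4$.

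Both steps are short; the only care needed is in the second, where the factor $4$ would be lost by a naive argument at the prime $2$. I would also note that all three values of $g$ genuinely occur. Examples are $(1,0,0,0)$, $(1,1,0,0)$ (where $m=2$ and the entries are even, giving $g=2$), and $(1,1,1,1)$ (where $m=4$ and the entries are $0$ or $\pm4$, giving $g=4$). These examples are not needed for the statement.
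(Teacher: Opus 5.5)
Your proof is correct and follows the paper's argument. The paper also uses signed sums of $m$ and the diagonal entries to get $g \mid 4a^2, 4b^2, 4c^2, 4d^2$; it then concludes $g \mid 4\gcd(a^2,b^2,c^2,d^2) = 4$ in one step, where you split into odd primes and the prime $2$.
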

\begin{proof}
    If $g$ is the greatest common divisor of the denominator and the entries of the matrix in Theorem \ref{thm: EulerRodrigues} then considering the sum of the the diagonal entries and the denominator implies that $g \mid 4 a^2$. 
    Similarly, taking signed combinations yields that $g \mid 4\operatorname{gcd}(b^2,c^2, d^2)$.
    The assumption that $\operatorname{gcd}(a^2,b^2, c^2, d^2 )=1$ now implies that $g\mid 4$, concluding the proof.
\end{proof}
\begin{proposition}\label{prop: Counting3}
    It holds that $\bbO_3(\ell, \bbQ) \neq \emptyset$ if and only if $\ell$ is odd. 
    Moreover, in this case $\#\bbO_3(\ell, \bbQ) = 48 \ell \prod_{p\mid \ell}(1 + 1/p) $.  
\end{proposition}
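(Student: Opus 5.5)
The plan is to count the special orthogonal matrices of level $\ell$ through the Euler--Rodrigues parametrization of Theorem \ref{thm: EulerRodrigues}, and then double the count to account for determinant $-1$. The doubling is legitimate because $\bQ\mapsto -\bQ$ is a bijection between the rational orthogonal matrices of determinant $1$ and those of determinant $-1$ in dimension three, and it preserves the level. The claim that $\bbO_3(\ell,\bbQ)=\emptyset$ for even $\ell$ is exactly Lemma \ref{lem: OddLevel}. So it remains, for odd $\ell$, to show that the number of special orthogonal matrices of level $\ell$ equals $24\ell\prod_{p\mid \ell}(1+1/p)$. This quantity is positive, which also gives non-emptiness.

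\emph{Reduction to primitive representations.} By Theorem \ref{thm: EulerRodrigues}, the special orthogonal matrices correspond two-to-one to primitive vectors $v=(a,b,c,d)\in\bbZ^4$, with $v$ and $-v$ giving the same matrix. By Lemma \ref{lem: gcd4}, the level of the matrix attached to $v$ is $m/g$, where $m=a^2+b^2+c^2+d^2$ and $g\in\{1,2,4\}$. Lemma \ref{lem: OddLevel} forces $m/g$ to be odd, and this determines $g$ from $m$:
\begin{itemize}
\item if $m$ is odd, then $g=1$;
\item if $m\equiv 2 \bmod 4$, then $g=2$;
\item if $4\mid m$, then $g=4$, and $m/4$ must be odd.
\end{itemize}
Hence the primitive vectors yielding level exactly $\ell$ are precisely the primitive representations of $\ell$, $2\ell$ and $4\ell$ as sums of four squares, with no case analysis of the entries needed. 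Writing $r_4^p(n)$ for the number of primitive representations of $n$, the number of special orthogonal matrices of level $\ell$ is therefore $\tfrac12\bigl(r_4^p(\ell)+r_4^p(2\ell)+r_4^p(4\ell)\bigr)$.

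\emph{Computing $r_4^p$.} I would use Jacobi's four-square theorem, $r_4(n)=8\sum_{d\mid n,\,4\nmid d} d$, together with Möbius inversion over common divisors: $r_4^p(n)=\sum_{k^2\mid n}\mu(k)\,r_4(n/k^2)$. For odd $m$, Jacobi's theorem gives $r_4(m)=8\sigma(m)$, $r_4(2m)=24\sigma(m)$ and $r_4(4m)=24\sigma(m)$.

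For $n=\ell$ and $n=2\ell$, only odd $k$ contribute. Using the identity $\sum_{k^2\mid \ell}\mu(k)\sigma(\ell/k^2)=\ell\prod_{p\mid\ell}(1+1/p)$ (checked on prime powers by multiplicativity), this gives $8\ell\prod_{p\mid \ell}(1+1/p)$ and $24\ell\prod_{p\mid \ell}(1+1/p)$ respectively.

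For $n=4\ell$, odd $k$ contribute $24\sigma(\ell/k^2)$, while $k=2k'$ with $k'$ odd contributes $-8\sigma(\ell/k'^2)$. The total is $16\ell\prod_{p\mid \ell}(1+1/p)$.

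Summing gives $48\ell\prod_{p\mid\ell}(1+1/p)$ primitive vectors. Halving yields $24\ell\prod_{p\mid\ell}(1+1/p)$ special orthogonal matrices, and doubling for determinant $-1$ gives the claimed count $48\ell\prod_{p\mid \ell}(1+1/p)$.

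\emph{Main obstacle.} The delicate point is the bookkeeping in the second step. One must make sure that every primitive vector with $m\in\{\ell,2\ell,4\ell\}$ really gives level $\ell$ and not a smaller level, and that no other value of $m$ can give level $\ell$. Both follow from the parity trick above: the level $m/g$ must be odd and $g\in\{1,2,4\}$, so fixing the level at $\ell$ leaves exactly the three values of $m$, each with a forced $g$. The other care point is the $k=2$ Möbius term in the computation of $r_4^p(4\ell)$, which is the only place where the factor $4\nmid d$ in Jacobi's formula matters.
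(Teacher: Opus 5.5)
Your proposal is correct and follows the paper's proof essentially step by step: it uses Lemma \ref{lem: OddLevel} for even $\ell$, doubles via $\bQ\mapsto-\bQ$, and reduces via Theorem \ref{thm: EulerRodrigues} and Lemma \ref{lem: gcd4} to $\tfrac12\bigl(r_4^p(\ell)+r_4^p(2\ell)+r_4^p(4\ell)\bigr)$. The only difference is that you derive the primitive four-square counts $8$, $24$ and $16$ times $\ell\prod_{p\mid\ell}(1+1/p)$ from Jacobi's theorem by M\"obius inversion (the summands should carry the weights $\mu(k)$ and $-\mu(k')$, which you dropped in writing but your totals use), whereas the paper simply cites this classical formula from \cite{cooper2007number}.
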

\begin{proof}
    That $\bbO_3(\ell, \bbQ)$ is empty if $\ell$ is even is immediate from Lemma \ref{lem: OddLevel}. 
    Suppose that $\ell$ is odd.
    Then, considering that exactly half of the matrices in $\bbO_3( \ell, \bbQ)$ have positive determinant as may be seen by exchanging $\bQ$ and $-\bQ$, 
    \begin{equation}
        \#\bbO_3(\ell, \bbQ) = 2\#\{\bQ \in \bbO_3(\ell,\bbQ) : \det(\bQ) = 1 \}. \label{eq:SilentZero}
    \end{equation}  
    Here, using Theorem \ref{thm: EulerRodrigues} with Lemmas \ref{lem: gcd4} and \ref{lem: OddLevel}, 
    \begin{equation}
        \#\{\bQ \in \bbO_3(\ell,\bbQ) : \det(\bQ) = 1 \} =  \frac{1}{2}\Bigl( \#\sR_{4}^p(\ell) + \#\sR_{4}^p(2\ell) + \#\sR_{4}^p(4\ell)\Bigr),\label{eq:ZenToy}
    \end{equation}
    where the factor $1/2$ accounts for the sign ambiguity in Theorem \ref{thm: EulerRodrigues} and $\sR_4(L)$ is the set of primitive decompositions of $L\geq 2$ as a sum of four squares: 
    \begin{equation}
        \sR_4^{p}(L) \de \bigl\{(a,b,c,d)\in \bbZ^2 : a^2 + b^2+ c^2 + d^2 = L,\ \operatorname{gcd}(a,b,c,d) = 1  \bigr\}.
    \end{equation}
    The cardinality of this set is classical (\eg \cite[Eq.(1.7)]{cooper2007number}). 
    In particular, 
    \begin{equation}
        \#\sR_4^p(L) = c_4(L) L \prod_{\textnormal{odd } p\mid L }(1 + 1/p)\label{eq:EasyPop} 
\end{equation}
    where $c_4(L) = 8$ if $L$ is odd, $c_4(L) = 12$ if $L$ is divisible by $2$ but not by $4$, and $c_4(L) = 4$ if $L$ is divisible by $4$ but not by $8$. 
    Combine \eqref{eq:SilentZero}--\eqref{eq:EasyPop} and use that $8+ 2\times 12+4\times 4 = 48$ to conclude. 
\end{proof}
\begin{proof}[Proof of \texorpdfstring{Theorem \ref{thm: 3Switching}}{Theorem}]
    The definition of the level implies that for $\bQ \in \bbO_3(\ell, \bbQ)$ the entries of $\ell \bQ$ do not have a common divisor. 
    Consequently, $\mathfrak{D}_1 = \bbZ$ and hence $d_1 = 1$.   
    Using that the $d_i$ satisfy the division condition $d_1 \mid d_2 \mid d_3$ combined with the constraint $d_i d_{n-i+1} = \ell^2$ from  
    Lemma \ref{lem: ProductSmith} now yields that $(d_1,d_2,d_3) = (1,\ell, \ell^2)$. 
    Consequently, $\prod_{i=1}^{\lfloor n/2 \rfloor }\prod_{j=i}^{n-i} (d_id_j /\ell^2) = 1/\ell^3$ for $n=3$. 
    Using Proposition \ref{prop: Counting3} and the linearity of expectation now concludes the proof. 
\end{proof}
\begin{remark}
    It is interesting to note that primitive decompositions as sums of squares occur in both counting problems for $n=2$ and $n=3$, but in different ways. 
    In \eqref{eq:CleverJet} we counted decompositions of $\ell^2$ in sums of two squares for $n=2$, while the problem with $n=3$ in  \eqref{eq:ZenToy} concerned decompositions of $g\ell$ with $g\in \{1,2,4 \}$ in four squares. 
    Note that $\ell$ is not squared in the latter case. 
\end{remark}

\subsection{Proof of \texorpdfstring{Corollary \ref{cor: Probability}}{Corollary}}\label{sec: ProofCorProbability}
\begin{lemma}\label{lem: DivisibleN}
    For any $n,L\geq 1$ and any $n\times n$ random matrix $\bX$ with entries in $\bbZ$ it holds with $N_n(\ell)$ as in \eqref{eq:QuickPaint} that  
    \begin{equation}
        \bbP\Bigl( \exists \bQ \in \cup_{\ell=2}^L\bbO_n(\ell , \bbQ): \bQ^{\T} \bX \bQ \in \bbZ^{n\times n} \Bigr) \leq \frac{1}{2^n n!} \sum_{\ell=2}^L\bbE[N_n(\ell)].\label{eq:PinkUrn} 
    \end{equation}
\end{lemma}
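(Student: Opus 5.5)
The plan is to improve the naive union bound (Markov's inequality applied to $\sum_\ell N_n(\ell)$) by the factor $2^n n!$. This factor is the order of the hyperoctahedral group $\bbB_n$ of signed permutation matrices, which is exactly $\bbO_n(1,\bbQ)$.

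The key observation is that $\bbB_n$ acts on the set of conjugacies of a fixed level. If $\bP\in\bbB_n$ and $\bQ\in\bbO_n(\ell,\bbQ)$, then $\bQ\bP$ is again rational orthogonal. Its level is also $\ell$: the entries of $\ell\bQ\bP$ are those of $\ell\bQ$ permuted and with signs changed, so $\ell\bQ\bP$ is integral exactly when $\ell\bQ$ is, and the minimality of $\ell$ is preserved. Moreover $(\bQ\bP)^{\T}\bX(\bQ\bP)=\bP^{\T}(\bQ^{\T}\bX\bQ)\bP$ is integral if and only if $\bQ^{\T}\bX\bQ$ is, since conjugating by a signed permutation only permutes entries and changes their signs. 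Hence the set counted by $N_n(\ell)$ in \eqref{eq:QuickPaint} is invariant under right multiplication by $\bbB_n$.

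This right action is free, because $\bQ\bP=\bQ$ with $\bQ$ invertible forces $\bP=\b1$. Every orbit therefore has exactly $2^n n!$ elements, and so $N_n(\ell)$ is a multiple of $2^n n!$ for every realization of $\bX$.

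It follows that, pointwise, if some $\bQ\in\cup_{\ell=2}^L\bbO_n(\ell,\bbQ)$ works, then $\sum_{\ell=2}^L N_n(\ell)\geq 2^n n!$. Equivalently, the indicator of the event is at most $\frac{1}{2^n n!}\sum_{\ell=2}^L N_n(\ell)$. Taking expectations, which is just Markov's inequality, gives \eqref{eq:PinkUrn}.

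No step here should be a real obstacle. The one point to check carefully is that the level is invariant under the action, and it is, since signed permutations preserve the multiset of absolute values of the entries. The sets $\bbO_n(\ell,\bbQ)$ are finite, as each entry of $\ell\bQ$ is an integer bounded by $\ell$ in absolute value, so all the counts are finite and the expectations make sense.
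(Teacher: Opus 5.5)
Your proof is correct and follows essentially the same route as the paper. In both, the free right action of the $2^n n!$ signed permutations preserves $\bbO_n(\ell,\bbQ)$ and the integrality of the conjugate, so $N_n(\ell)\in 2^n n!\,\bbZ_{\geq 0}$, and a Markov-type bound finishes. The only cosmetic differences are these: the paper proves $2^n n!\,\bbP(N_n(\ell)\neq 0)\leq \bbE[N_n(\ell)]$ for each $\ell$ and then takes a union bound, whereas you bound the indicator pointwise by $\frac{1}{2^n n!}\sum_{\ell=2}^L N_n(\ell)$; and you spell out the level-invariance check that the paper leaves implicit.
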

\begin{proof}
    A \emph{signed permutation} is a matrix of the form $\bS = \bP\bD$ with $\bP$ a permutation matrix and $\bD$ a diagonal matrix with diagonal entries $\pm 1$.  
    Given some $\bQ\in \bbO_n(\ell, \bbQ)$ all the $2^n n!$ matrices in $\bbO_n(\ell, \bbQ)$ of the form $\tilde{\bQ}_S = \bQ\bS$ are distinct due to the orthogonality of the columns. 
    Moreover, $\bQ^{\T} \bX \bQ\in \bbZ^{n\times n}$ if and only if  $\tilde{\bQ}_S^{\T} \bX \tilde{\bQ}_S\in \bbZ^{n\times n}$ for all $\bS$. 
    Hence, $N_n(\ell)$ takes values in $2^n n! \bbZ_{\geq 0}$ and consequently
    \begin{equation}
        \textstyle 2^n n!\bbP(N_n(\ell) \neq 0)  \leq \sum_{m=0}^\infty m \bbP(N_n(\ell) = m) = \bbE[N_n(\ell)].  
    \end{equation}
    The claim \eqref{eq:PinkUrn} now follows from the union bound. 
\end{proof}
\begin{proposition}\label{prop: Upperbound2}
    Suppose that $n=2$ and let $\bX_k$ be as in Corollary \ref{cor: Probability}. 
    Then, 
    \begin{equation}
        \lim_{L\to \infty}\lim_{k\to \infty} \bbP\Bigl( \exists \bQ \in \cup_{\ell=2}^L\bbO_n(\ell , \bbQ): \bQ^{\T} \bX \bQ \in \bbZ^{n\times n} \Bigr) \leq \frac{12 G }{\pi^2} -1,\label{eq:OldBat} 
    \end{equation}
    where  $\pi \approx 3.14...$ and $G$ is \emph{Catalan's constant}, given by  
    \begin{equation}
        G = \sum_{n=0}^\infty \frac{(-1)^n}{(2n+1)^2} =0.9159655941\ldots \label{eq:ShySquid}
    \end{equation}
\end{proposition}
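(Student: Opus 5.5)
The plan is to combine Lemma \ref{lem: DivisibleN} with the exact formula of Theorem \ref{thm: Switching}, after justifying the passage from the entries uniform on $\{1,\ldots,k\}$ to entries uniform modulo $\ell^2$. Then I sum the resulting series over $\ell$ as an Euler product.

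\emph{Step 1 (limit $k\to\infty$).} Fix $L$ and put $M \de \operatorname{lcm}(\ell^2 : 2\leq \ell\leq L)$. For $\bQ$ of level $\ell\leq L$, whether $\bQ^{\T}\bX\bQ\in\bbZ^{2\times 2}$ depends only on $\bX \bmod \ell^2$, hence only on $\bX\bmod M$. So the event in \eqref{eq:OldBat} depends only on $\bX_k \bmod M$. As $k\to\infty$, the three independent upper-triangular entries reduced mod $M$ converge in distribution to independent uniform variables on $\bbZ/M\bbZ$. Since the event is determined by finitely many residues, its probability converges to the probability $P_L$ of the same event for a matrix $\bY$ with independent uniform reductions mod $M$.

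Such a $\bY$ has independent uniform reductions mod $\ell^2$ for every $\ell\leq L$. Theorem \ref{thm: Switching} therefore applies to each $\ell$, and Lemma \ref{lem: DivisibleN} with $2^n n! = 8$ gives
\begin{equation}
P_L \leq \frac{1}{8}\sum_{\ell=2}^L \bbE[N_2(\ell)] = \sum_{\substack{2\leq \ell\leq L\\ p\mid \ell \Rightarrow p\equiv 1 \bmod 4}} \frac{2^{\omega(\ell)}}{\ell^2}, \nonumber
\end{equation}
where $\omega(\ell)=r$ is the number of distinct prime factors. The sums increase in $L$, so the limit in $L$ is bounded by the full series minus its $\ell=1$ term.

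\emph{Step 2 (Euler product).} The summand is multiplicative and supported on integers with all prime factors $\equiv 1 \bmod 4$. Hence
\begin{equation}
S \de \sum_{\ell\geq 1} \frac{2^{\omega(\ell)}}{\ell^2}\mathbf{1}_{\{p\mid \ell\Rightarrow p\equiv 1 \bmod 4\}} = \prod_{p\equiv 1 \bmod 4}\Bigl(1 + \frac{2}{p^2-1}\Bigr) = \prod_{p\equiv 1\bmod 4} \frac{1-p^{-4}}{(1-p^{-2})^2}. \nonumber
\end{equation}
The proposition then reduces to the identity $S = 12G/\pi^2$.

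\emph{Step 3 (identify the product).} This is the main computational step. Set
\begin{equation}
A \de \prod_{p\equiv 1 \bmod 4}(1-p^{-2}), \qquad B \de \prod_{p\equiv 3 \bmod 4}(1-p^{-2}), \qquad B' \de \prod_{p\equiv 3 \bmod 4}(1+p^{-2}). \nonumber
\end{equation}
The Euler products of $\zeta(2)$, of $L(2,\chi_4)=G$ (with $\chi_4$ the nontrivial character mod $4$), and of $\zeta(4)$ give
\begin{equation}
\zeta(2)^{-1} = \tfrac{3}{4}AB, \qquad G^{-1} = AB', \qquad \zeta(4)^{-1} = \tfrac{15}{16}\,BB' \prod_{p\equiv 1 \bmod 4}(1-p^{-4}). \nonumber
\end{equation}
Solving the last relation for $\prod_{p\equiv 1}(1-p^{-4})$ and dividing by $A^2 = (AB)(AB')/(BB')$ yields
\begin{equation}
S = \frac{16}{15\,\zeta(4)}\cdot\frac{3\zeta(2)}{4}\cdot G = \frac{4\,\zeta(2)\,G}{5\,\zeta(4)}. \nonumber
\end{equation}
With $\zeta(2)=\pi^2/6$ and $\zeta(4)=\pi^4/90$ we have $\zeta(2)/\zeta(4) = 15/\pi^2$, so $S = 12G/\pi^2$.

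Subtracting the $\ell=1$ term gives the bound $12G/\pi^2 - 1 \approx 0.114$. The only delicate points are the justification in Step 1 and the bookkeeping of the factor $2$ at $p=2$ and $p\equiv 3$ in the Euler products, which I expect to be the main place to be careful.
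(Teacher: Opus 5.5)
Your proof is correct and follows the paper's skeleton: Lemma \ref{lem: DivisibleN} with $2^2\cdot 2!=8$, the value $\bbE[N_2(\ell)]=2^{r+3}/\ell^2$ from Theorem \ref{thm: Switching}, and the bound of $\sum_{\ell=2}^L 2^{\omega(\ell)}\ell^{-2}$ by $-1+\prod_{p\equiv 1 \bmod 4}(1+p^{-2})/(1-p^{-2})$.

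The genuine difference is how this product is evaluated. The paper does not derive it. It quotes from Finch the two separate values $\prod_{p\equiv 1\bmod 4}(1-p^{-2})=16K^2\pi^{-2}$ and $\prod_{p\equiv 1\bmod 4}(1+p^{-2})=192K^2G\pi^{-4}$, where the auxiliary constant $K$ cancels in the ratio. You instead get the ratio directly from the Euler products of $\zeta(2)$, $\zeta(4)$ and $L(2,\chi_4)=G$. The unknown $p\equiv 3 \bmod 4$ factors are eliminated via $A^2=(AB)(AB')/(BB')$. Your bookkeeping at $p=2$ is right: the local factors $\tfrac34$ and $\tfrac{15}{16}$, and $\chi_4(2)=0$. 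The final value $4\zeta(2)G/(5\zeta(4))=12G/\pi^2$ is also correct. Your route is self-contained and explains why only $G$ and $\pi$ survive. The paper's is shorter but rests on an external table of constants.

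Your treatment of $k\to\infty$ differs mildly from the paper's. You pass to a matrix uniform modulo $M=\operatorname{lcm}(\ell^2:\ell\leq L)$ before applying Lemma \ref{lem: DivisibleN}. The paper instead applies the lemma to $\bX_k$ and lets $k\to\infty$ in each $\bbE[N_2(\ell)]$, which depends only on $\bX_k \bmod \ell^2$. Both are fine. The only thing you leave implicit is that the limit in $L$ exists. It does, because the events are nested, so $P_L$ is non-decreasing and bounded.
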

\begin{proof} 
    Note that $\bX_k \bmod \ell$ converges in distribution to the uniform law when $k\to \infty$ and $\ell$ is fixed.
    In particular, the limit over $k$ on the left-hand side of \eqref{eq:OldBat} exists, and the limit over $L$ then also exists since the probability is a non-increasing function of $L$. 
    Further, by \eqref{eq:GhostlyFan} and the fact that that $8 = 2^3$,   
    \begin{align}
        \lim_{k\to \infty}&\sum_{\ell=2}^L \frac{1}{8}\bbE[N_n(\ell)] =   \sum_{\ell=2}^L \ell^{-2}   \prod_{p\mid \ell} \Bigl(2 \bb1\bigl\{p \equiv 1 \bmod 4  \bigr\}  \Bigr) \label{eq:PaleOwl}\\   
        &\leq -1 + \prod_{p \equiv 1 \bmod 4} \Bigl(1 + 2\sum_{j=1}^\infty p^{-2j}\Bigr)  
        = -1 + \prod_{p \equiv 1 \bmod 4}  \frac{1 + p^{-2}}{1-p^{-2}}.  \nonumber
    \end{align}
    Here, the inequality may be verified by expanding the product into a sum, and the final equality is simply the geometric series. 

    The product in \eqref{eq:PaleOwl} can be computed exactly. 
    Indeed, by \cite[\S2.3.1]{finch2003mathematical} there exists a constant $K>0$ with 
    $
        \prod_{p \equiv 1 \bmod 4}( 1 - p^{-2}) = 16K^2 \pi^{-2}$ and $\prod_{p \equiv 1 \bmod 4}(1 + p^{-2}) = 192 K^2 G \pi^{-4}  
    $, 
    where $G$ is Catalan's constant \eqref{eq:ShySquid}. 
    Substitute this in \eqref{eq:PaleOwl} and use Lemma \ref{lem: DivisibleN} to conclude. 
    \end{proof}
    
\begin{proposition}
    Suppose that $n=3$ and let $\bX_k$ be as in Corollary \ref{cor: Probability}. 
    Then, 
    \begin{equation}
        \lim_{L\to \infty}\lim_{k\to \infty} \bbP\Bigl( \exists \bQ \in \cup_{\ell=2}^L\bbO_n(\ell , \bbQ): \bQ^{\T} \bX \bQ \in \bbZ^{n\times n} \Bigr) \leq \frac{105 \zeta(3)}{\pi^4}  - 1, 
    \end{equation}
    where $\zeta(3)$ is \emph{Ap\'ery's constant}: 
    \begin{equation}
        \zeta(3) = \sum_{n=1}^\infty \frac{1}{n^3} =  1.202 056 903 1\ldots
    \end{equation}
\end{proposition}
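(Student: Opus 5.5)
We follow the proof of Proposition \ref{prop: Upperbound2} with $n=3$, now dividing by $2^3\cdot 3! = 48$ in Lemma \ref{lem: DivisibleN}. The first step is the same limiting argument as before. For fixed $\ell$, the reduction $\bX_k \bmod \ell^2$ converges in distribution to the uniform law as $k\to\infty$. Hence Theorem \ref{thm: 3Switching} gives
\begin{equation}
\lim_{k\to\infty}\bbE[N_3(\ell)] = \frac{48}{\ell^2}\prod_{p\mid \ell}\Bigl(1+\frac1p\Bigr)\bb1\{\ell \textnormal{ odd}\}.\nonumber
\end{equation}
Since the probability is monotone in $L$, both limits exist. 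By Lemma \ref{lem: DivisibleN}, the left-hand side is then bounded by
\begin{equation}
\sum_{\ell\geq 2,\ \ell \textnormal{ odd}} \frac{1}{\ell^2}\prod_{p\mid\ell}\Bigl(1+\frac1p\Bigr).\nonumber
\end{equation}

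The second step writes this as an Euler product minus the $\ell=1$ term. The summand is multiplicative, so
\begin{equation}
\sum_{\ell \textnormal{ odd}} \frac{1}{\ell^2}\prod_{p\mid\ell}\Bigl(1+\frac1p\Bigr) = \prod_{p \textnormal{ odd}}\Bigl(1 + \Bigl(1+\frac1p\Bigr)\sum_{j\geq1}p^{-2j}\Bigr) = \prod_{p\textnormal{ odd}} \frac{1+p^{-3}}{1-p^{-2}},\nonumber
\end{equation}
because
\begin{equation}
1 + \frac{(1+p^{-1})p^{-2}}{1-p^{-2}} = \frac{1 - p^{-2} + p^{-2} + p^{-3}}{1-p^{-2}}.\nonumber
\end{equation}
Here the bound will actually be an equality, unlike the $n=2$ case.

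The third step evaluates the product with zeta values. We have $\prod_p (1-p^{-2})^{-1} = \zeta(2)$ and $\prod_p(1+p^{-3}) = \zeta(3)/\zeta(6)$. Removing the factor at $p=2$ gives
\begin{equation}
\prod_{p\textnormal{ odd}}\frac{1+p^{-3}}{1-p^{-2}} = \frac{\zeta(2)\zeta(3)}{\zeta(6)}\cdot\frac{1-2^{-2}}{1+2^{-3}} = \frac{\zeta(2)\zeta(3)}{\zeta(6)}\cdot\frac{2}{3}.\nonumber
\end{equation}
Using $\zeta(2)=\pi^2/6$ and $\zeta(6)=\pi^6/945$, we get
\begin{equation}
\frac{\zeta(2)}{\zeta(6)} = \frac{945}{6\pi^4},\qquad \frac{945}{6\pi^4}\cdot\frac{2}{3} = \frac{105}{\pi^4}.\nonumber
\end{equation}
So the product equals $105\zeta(3)/\pi^4$, and subtracting $1$ for $\ell=1$ gives the claimed bound.

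The main obstacle is bookkeeping rather than any conceptual step. One must justify exchanging the limit in $k$ with the finite sum over $\ell$, which is straightforward because the sum is finite. One must also confirm that the convergence in distribution holds modulo $\ell^2$, as required by Corollary \ref{cor: RationalConjugation}, and not merely modulo $\ell$.

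Finally, to conclude Corollary \ref{cor: Probability} we check that the bound is less than $1$: $105\cdot1.2020569/97.409 \approx 1.296$, so the bound is about $0.296$.
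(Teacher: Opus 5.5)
Your proof is correct and follows the paper's argument step for step: Lemma \ref{lem: DivisibleN} with $2^3\cdot 3!=48$, the formula \eqref{eq:KindRock} in the limit $k\to\infty$, the Euler product $\prod_{p \textnormal{ odd}}(1+p^{-3})/(1-p^{-2})$, and its evaluation as $\tfrac{2}{3}\zeta(2)\zeta(3)/\zeta(6)=105\zeta(3)/\pi^4$. Two small remarks: requiring $\bX_k$ to become uniform modulo $\ell^2$ (the paper writes modulo $\ell$) is the right level of care, but your aside that the $n=2$ bound is only an inequality is inaccurate, since that summand is also multiplicative and the full series equals its Euler product there too; the ``$\leq$'' in the paper in both cases only reflects truncation at finite $L$.
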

\begin{proof}
    As in the proof of Proposition \ref{prop: Upperbound2} the limit exists and \eqref{eq:KindRock} yields that 
    \begin{align}
        \lim_{k\to \infty} &\sum_{\ell=2}^L \frac{1}{48} \bbE[N_n(\ell)]  = \frac{48}{48}\sum_{\ell = 2}^L \ell^{-2} \bb1\{2 \nmid \ell\} \prod_{p\mid \ell} \Bigl(1 + p^{-1} \Bigr)\label{eq:ColdPig}\\ 
        &\leq  -1 + \prod_{\textnormal{odd } p}\Bigl( 1 + (1+p^{-1}) \sum_{j=1}^\infty p^{-2j}\Bigr)  =   -1 + \prod_{\textnormal{odd } p}\Bigl(  \frac{1 + p^{-3}}{1-p^{-2}}\Bigr),    \nonumber   
    \end{align}
    where the final step used the geometric series. 
    Further, by the Euler product for the Riemann zeta function, 
    \begin{equation}
        \prod_{\textnormal{odd } p}\Bigl(  \frac{1 + p^{-3}}{1-p^{-2}}\Bigr) = \frac{1-2^{-2}}{1+2^{-3}}\prod_{\textnormal{primes } p}\Bigl(  \frac{1 - p^{-6}}{(1-p^{-2})(1-p^{-3})}\Bigr) = \frac{2}{3}\frac{\zeta(6)^{-1}}{\zeta(2)^{-1}\zeta(3)^{-1} }.  
    \end{equation}
   Here, recall the classical values $\zeta(2)= \pi^2 / 6$ and $\zeta(6) = \pi^6/945$. 
   Using Lemma \ref{lem: DivisibleN} and that $(2\times 945  )/(3 \times 6 )  = 105$ now concludes the proof. 
\end{proof}
\begin{proof}[Proof of \texorpdfstring{Corollary \ref{cor: Probability}}{Corollary}]
    The claim is vacuous if $n=1$ as there are then no rational orthogonal matrices (scalars) with level $\ell \geq 2$. 
    If $n=2$, then Proposition \ref{prop: Upperbound2} yields the claim since $12 G /\pi^2 -1 = 0.11368\ldots <1$. 
    Finally, if $n=3$ then the claim follows from Proposition \ref{prop: Upperbound2} since $105 \zeta(3)/\pi^{4} -1 =0.29573\ldots <1$.  
\end{proof}

\begin{remark}
    That the bound for $n=2$ is smaller than the bound for $n=3$ is due to the restrictive constraints that determine when $\bbP(N_2(\ell) \neq 0) \neq 0$. 
    Indeed, the terms in \eqref{eq:PaleOwl} are greater than the terms in \eqref{eq:ColdPig} for those $\ell$ of the form \eqref{eq:KnownToy} where both are nonzero. 
    The restrictions on the prime factorization are likely to be an exceptional phenomenon that is specific to low dimensionality, so we would expect a decreasing trend in the probabilities as $n$ tends to infinity.   
\end{remark}

\subsection*{Acknowledgements}
I thank Nils van de Berg,  Matthias L\"owe, Nikita Lvov, and Siqi Yang for helpful discussions related to this paper.

Funded by the Deutsche Forschungsgemeinschaft (DFG, German Research Foundation) under Germany's Excellence Strategy EXC 2044/2 –390685587, Mathematics Münster: Dynamics–Geometry–Structure.

\bibliographystyle{abbrv}

\begin{thebibliography}{10}

\bibitem{abiad2022codeterminantal}
A.~Abiad, C.~Alfaro, K.~Heysse, and M.~Vargas.
\newblock Codeterminantal graphs.
\newblock {\em Linear Algebra and its Applications}, 2022.
\newblock \doi{10.1016/j.laa.2022.05.021}.

\bibitem{abiad2025distinguishing}
A.~Abiad, C.~Alfaro, and R.~Villagr{\'a}n.
\newblock Distinguishing graphs by their spectra, {S}mith normal forms and complements.
\newblock {\em Applied Mathematics and Computation}, 2025.
\newblock \doi{10.1016/j.amc.2024.129198}.

\bibitem{abiad2025counting}
A.~Abiad, N.~van~de Berg, and R.~Simoens.
\newblock Counting cospectral graphs obtained via switching.
\newblock {\em Discrete Mathematics}, 2025.
\newblock \doi{10.1016/j.disc.2025.114775}.

\bibitem{brown1993matrices}
W.~Brown.
\newblock {\em Matrices over Commutative Rings}.
\newblock Monographs and Textbooks in Pure and Applied Mathematics. Marcel Dekker, 1992.

\bibitem{collatz1957}
V.~Collatz and U.~Sinogowitz.
\newblock {\em Abhandlungen aus dem Mathematischen Seminar der Universit{\"a}t Hamburg}, chapter Spektren endlicher grafen.
\newblock Springer, 1957.
\newblock \doi{10.1007/BF02941924}.

\bibitem{cooper2007number}
S.~Cooper and M.~Hirschhorn.
\newblock On the number of primitive representations of integers as sums of squares.
\newblock {\em The Ramanujan Journal}, 2007.
\newblock \doi{10.1007/s11139-006-0240-6}.

\bibitem{Galuten01101987}
J.~Cremona.
\newblock Letter to the editor.
\newblock {\em The American Mathematical Monthly}, 1987.
\newblock \doi{10.1080/00029890.1987.12000713}.

\bibitem{elsheikh2015relating}
M.~Elsheikh and M.~Giesbrecht.
\newblock Relating $p$-adic eigenvalues and the local {S}mith normal form.
\newblock {\em Linear Algebra and its Applications}, 2015.
\newblock \doi{10.1016/j.laa.2015.05.001}.

\bibitem{finch2003mathematical}
S.~Finch.
\newblock {\em Mathematical constants}, volume~I.
\newblock Cambridge university press, 2003.
\newblock \href{http://www.cambridge.org/9780521818056}{\nolinkurl{http://www.cambridge.org/9780521818056}}.

\bibitem{gantmakher2000theory}
F.~Gantmacher.
\newblock {\em The theory of matrices}, volume~1.
\newblock {A}merican {M}athematical {S}ociety, 1959.
\newblock \href{https://archive.org/details/gantmacher-the-theory-of-matrices-vol-1-1959}{https://archive.org/details/gantmacher-the-theory-of-matrices-vol-1-1959}.

\bibitem{gordon1992one}
C.~Gordon, D.~Webb, and S.~Wolpert.
\newblock One cannot hear the shape of a drum.
\newblock {\em Bulletin of the American Mathematical Society}, 1992.
\newblock \doi{10.1090/S0273-0979-1992-00289-6}.

\bibitem{gunthard1956zusammenhang}
H.~G{\"u}nthard and H.~Primas.
\newblock {Z}usammenhang von graphentheorie und {M}{O}-theorie von molekeln mit systemen konjugierter bindungen.
\newblock {\em Helvetica Chimica Acta}, 1956.
\newblock \doi{10.1002/hlca.19560390623}.

\bibitem{haemers2016almost}
W.~H. Haemers.
\newblock Are almost all graphs determined by their spectrum.
\newblock {\em Notices of the South African Mathematical Society}, 2016.

\bibitem{johnson1980note}
C.~Johnson and M.~Newman.
\newblock A note on cospectral graphs.
\newblock {\em Journal of Combinatorial Theory, Series B}, 1980.
\newblock \doi{10.1016/0095-8956(80)90058-1}.

\bibitem{kac1966can}
M.~Kac.
\newblock Can one hear the shape of a drum?
\newblock {\em The {A}merican mathematical monthly}, 1966.
\newblock \doi{10.1080/00029890.1966.11970915}.

\bibitem{lang2012algebra}
S.~Lang.
\newblock {\em Algebra}.
\newblock Springer Science \& Business Media, revised third edition, 2002.
\newblock \doi{10.1007/978-1-4613-0041-0}.

\bibitem{vanwerde2026satisfaction}
N.~Lvov and A.~Van~Werde.
\newblock In preparation.

\bibitem{marcus1977number}
D.~Marcus.
\newblock {\em Number fields}.
\newblock Springer, second edition, 2018.
\newblock \doi{10.1007/978-3-319-90233-3}.

\bibitem{milnor1964eigenvalues}
J.~Milnor.
\newblock Eigenvalues of the {L}aplace operator on certain manifolds.
\newblock {\em Proceedings of the National Academy of Sciences}, 1964.
\newblock \doi{10.1073/pnas.51.4.542}.

\bibitem{neukirch1999algebraic}
J.~Neukirch.
\newblock {\em Algebraic number theory}.
\newblock Springer Science \& Business Media, first edition, 1999.
\newblock \doi{10.1007/978-3-662-03983-0}.

\bibitem{newman1991matrices}
M.~Newman and R.~Thompson.
\newblock Matrices over rings of algebraic integers.
\newblock {\em Linear Algebra and its Applications}, 1991.
\newblock \doi{10.1016/0024-3795(91)90284-4}.

\bibitem{pall1940rational}
G.~Pall.
\newblock On the rational automorphs of $x^2_1 + x^2_2+ x^2_3$.
\newblock {\em Annals of Mathematics}, 1940.
\newblock \doi{10.2307/1968855}.

\bibitem{qiu2023smith}
L.~Qiu, W.~Wang, and H.~Zhang.
\newblock Smith normal form and the generalized spectral characterization of graphs.
\newblock {\em {D}iscrete {M}athematics}, 2023.
\newblock \doi{10.1016/j.disc.2022.113177}.

\bibitem{rushanan1995eigenvalues}
J.~Rushanan.
\newblock Eigenvalues and the {S}mith normal form.
\newblock {\em Linear algebra and its applications}, 1995.
\newblock \doi{10.1016/0024-3795(93)00131-I}.

\bibitem{van2003graphs}
E.~van {D}am and W.~{H}aemers.
\newblock {W}hich graphs are determined by their spectrum?
\newblock {\em {L}inear {A}lgebra and its {A}pplications}, 2003.
\newblock \doi{10.1016/S0024-3795(03)00483-X}.

\bibitem{vanwerde2026exact}
A.~Van~Werde.
\newblock Supplementary material for ``{E}xact cospectrality probabilities for uniform random matrices''.
\newblock 2026.
\newblock \href{https://alexandervanwerde.be/wp-content/uploads/2026/01/Supplemental___Exact_cospectrality.pdf}{\nolinkurl{https://alexandervanwerde.be/wp-content/uploads/2026/01/Supplemental___Exact_cospectrality.pdf}}.

\bibitem{VU2014combinatorial}
V.~Vu.
\newblock Combinatorial problems in random matrix theory.
\newblock In {\em Proceedings of the International Congress of Mathematician, Seol 2014}, volume~IV, pages 489--508, 2014.
\newblock \href{https://www.mathunion.org/fileadmin/ICM/Proceedings/ICM2014.4/ICM2014.4.pdf}{\nolinkurl{https://www.mathunion.org/fileadmin/ICM/Proceedings/ICM2014.4/ICM2014.4.pdf}}.

\bibitem{vu2021recent}
V.~Vu.
\newblock Recent progress in combinatorial random matrix theory.
\newblock {\em Probability Surveys}, 2021.
\newblock \doi{10.1214/20-PS346}.

\bibitem{wang2017simple}
W.~Wang.
\newblock A simple arithmetic criterion for graphs being determined by their generalized spectra.
\newblock {\em {J}ournal of {C}ombinatorial {T}heory, {S}eries {B}}, 2017.
\newblock \doi{10.1016/j.jctb.2016.07.004}.

\bibitem{wang2010asymptotic}
W.~Wang and C.-X. Xu.
\newblock On the asymptotic behavior of graphs determined by their generalized spectra.
\newblock {\em Discrete mathematics}, 2010.
\newblock \doi{10.1016/j.disc.2009.07.028}.

\bibitem{wang2024rational}
W.~Wang, J.~Yang, and H.~Zhang.
\newblock Rational orthogonal matrices and isomorphism of graphs.
\newblock {\em Discrete Mathematics}, 2024.
\newblock \doi{10.1016/j.disc.2024.114002}.

\bibitem{wang2016square}
W.~Wang and T.~Yu.
\newblock Square-free discriminants of matrices and the generalized spectral characterizations of graphs.
\newblock {\em arXiv preprint arXiv:1608.01144}, 2016.
\newblock \doi{10.48550/arXiv.1608.01144}.

\bibitem{wang2025almost}
W.~Wang and D.~Zhao.
\newblock Almost all graphs have no cospectral mate with fixed level.
\newblock {\em arXiv preprint arXiv:2509.05781}, 2025.
\newblock \doi{10.48550/arXiv.2509.05781}.

\bibitem{wolpert1979length}
S.~Wolpert.
\newblock The length spectra as moduli for compact {R}iemann surfaces.
\newblock {\em Annals of Mathematics}, 1979.
\newblock \doi{10.2307/1971114}.

\bibitem{zariski2013commutative}
O.~Zariski and P.~Samuel.
\newblock {\em Commutative algebra: Volume I}.
\newblock Springer Science \& Business Media, 1958.
\newblock \href{https://archive.org/details/commutativealgeb0001zari}{\nolinkurl{https://archive.org/details/commutativealgeb0001zari}}.

\end{thebibliography}

\appendix
\section{Details for \texorpdfstring{\eqref{eq:WittyBird}}{Eq.} in the proof of \texorpdfstring{Lemma \ref{lem: SmithIdealSmithNormal}}{Lemma}}\label{apx: DetailsSmith}
    \begin{lemma}
        Let $R$ be a Dedekind domain and consider a square matrix $\bG\in R^{n\times n}$ with $\det(\bG) \neq 0$.
        Then, 
        \begin{enumerate}[leftmargin = 2em]
            \item There exists a sequence of ideals $\mathfrak{a}_1,\ldots,\mathfrak{a}_n \subseteq R$ with the property that $\mathfrak{a}_{i+1}\subseteq\mathfrak{a}_i$ for every $i$ and $\coker(\bG) \cong \oplus_{i=1}^n R/\mathfrak{a}_i$.
            \item It necessarily holds that $\mathfrak{a}_i = \mathfrak{d}_i$ with $\mathfrak{d}_i$ the $i$th Smith ideal.
        \end{enumerate}
    \end{lemma}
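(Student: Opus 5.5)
Part (1) is the structure theorem for finitely generated torsion modules over a Dedekind domain, and part (2) follows by localizing and comparing determinantal ideals prime by prime.

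\emph{Existence.} Since $\det(\bG)\neq 0$, the cokernel $M=\coker(\bG)$ is a finitely generated torsion $R$-module; indeed $\det(\bG)$ annihilates it by the adjugate identity. Its annihilator $J$ is therefore nonzero, and $M$ is naturally a module over $R/J$. As recalled in Section~\ref{sec: ProofThmExact}, $R/J$ is a principal ideal ring. Present $M$ as a quotient of $(R/J)^m$ and take the Smith normal form of the relation matrix over $R/J$ \cite[Theorem 15.24]{brown1993matrices}. This gives $M\cong\oplus_i (R/J)/\bar{\mathfrak a}_i$ with the ideals of $R/J$ forming a chain. Lifting them to ideals $\mathfrak a_i\supseteq J$ of $R$ gives $M\cong\oplus_i R/\mathfrak a_i$ with $\mathfrak a_{i+1}\subseteq\mathfrak a_i$. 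It remains to fix the number of summands at exactly $n$. Localizing at any prime $\mathfrak p$ gives a square presentation of $M_{\mathfrak p}$ with $n$ generators over the DVR $R_{\mathfrak p}$, so at most $n$ summands are nonzero locally. Summands equal to $R/R=0$ can be discarded or added freely, so we pad or trim to exactly $n$ while keeping the chain property.

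\emph{Identification with $\mathfrak d_i$.} Both the determinantal ideals and the invariants of the decomposition commute with localization: $(\mathfrak D_i)_{\mathfrak p}$ is generated by the $i\times i$ minors of $\bG$ in $R_{\mathfrak p}$, and $M_{\mathfrak p}\cong\oplus_i R_{\mathfrak p}/(\mathfrak a_i)_{\mathfrak p}$. Over the PID $R_{\mathfrak p}$, classical Smith normal form theory gives $\bG=\bU\operatorname{diag}(\pi^{e_1},\dots,\pi^{e_n})\bV$ with $e_1\le\dots\le e_n$ finite, because $\det(\bG)\neq0$. Hence $(\mathfrak D_i)_{\mathfrak p}=\pi^{e_1+\dots+e_i}R_{\mathfrak p}$. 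Uniqueness of invariant factors over a PID then forces $(\mathfrak a_i)_{\mathfrak p}=\pi^{e_i}R_{\mathfrak p}$; the chain condition is what makes the ordering match. This uniqueness is the step needing the most care, and the fact that both sides are chains of length $n$ is exactly what pins it down. Consequently $(\mathfrak a_1\cdots\mathfrak a_i)_{\mathfrak p}=(\mathfrak D_i)_{\mathfrak p}$ for every $\mathfrak p$. In a Dedekind domain an ideal is determined by its localizations, via the exponents $v_{\mathfrak p}$, so $\mathfrak a_1\cdots\mathfrak a_i=\mathfrak D_i$ for all $i$.

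Since all $\mathfrak D_i\neq 0$, as noted before Lemma~\ref{lem: ProductSmith}, cancellation in the group of fractional ideals gives $\mathfrak a_1=\mathfrak D_1=\mathfrak d_1$ and $\mathfrak a_i=\mathfrak D_{i-1}^{-1}\mathfrak D_i=\mathfrak d_i$. This proves (2). Together with (1) it also yields the inclusion $\mathfrak d_{i+1}\subseteq\mathfrak d_i$ used earlier in the paper. In fact the local computation alone already gives $v_{\mathfrak p}(\mathfrak d_i)=e_i\le e_{i+1}$, which is an independent check of that inclusion.
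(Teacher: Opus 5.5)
Your proof is correct. Part (1) follows the paper, but part (2) takes a genuinely different route.

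\emph{Part (1).} Like you, the paper passes to the principal ideal ring $R/J$, with $J=\det(\bG)R$, and lifts the invariant ideals. It applies the Smith normal form to the $n\times n$ matrix $\bG \bmod J$ itself, so exactly $n$ summands appear and no padding or trimming is needed. You can do the same by using the presentation $(R/J)^n\to M$ induced by $\bG$, which is valid because $JR^n\subseteq \bG R^n$. If you keep your trimming step, it needs one more sentence. ``At most $n$ nonzero summands locally'' gives the global bound only through the chain property. All proper $\mathfrak a_i$ lie in the largest proper one, hence in a common maximal ideal $\mathfrak p$. At that $\mathfrak p$ all of them survive, and $\dim M_{\mathfrak p}/\mathfrak p M_{\mathfrak p}\le n$.

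\emph{Part (2).} The paper obtains $\prod_{k\le i}\mathfrak a_k=\mathfrak D_i$ globally by computing the Fitting ideal $F_{n-i}(\operatorname{coker}\bG)$ in two ways. From the presentation by $\bG$ it is $\mathfrak D_i$. From the decomposition it is $\prod_{k\le i}\mathfrak a_k$, via the direct-sum formula for Fitting ideals and the chain property. You instead localize at each maximal ideal. You read off $v_{\mathfrak p}(\mathfrak D_i)=e_1+\dots+e_i$ from the Smith form over the DVR $R_{\mathfrak p}$, get $v_{\mathfrak p}(\mathfrak a_i)=e_i$ from uniqueness of invariant factors, and glue using that ideals of a Dedekind domain are determined by their valuations.

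\emph{Comparison.} The Fitting-ideal route handles everything globally and independently of the presentation in one stroke, at the cost of citing the Fitting-ideal calculus from Lang (invariance, the direct-sum formula, $F_0(R/\mathfrak a)=\mathfrak a$). Yours needs only classical PID facts and Cauchy--Binet. It also yields the explicit formula $v_{\mathfrak p}(\mathfrak d_i)=e_i(\mathfrak p)$, which gives $\mathfrak d_{i+1}\subseteq\mathfrak d_i$ without using (1). Combined with the primary decomposition of torsion modules and the Chinese remainder theorem, that formula could even prove (1) directly, without the structure theorem over $R/J$.
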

    \begin{proof}
        
    It holds for every $v\in R^n$ that $\det(\bG) v =\bG (\bG^{-1}\det(\bG)) v  \equiv 0 \bmod \bG R^n$.
    Thus, $\det(\bG)\coker(\bG) =0$ meaning that $\coker(\bG)$ is not only a $R$-module but also a $R/\det(\bG) R$-module.
    Recall that every nontrivial quotient of a Dedekind domain is a principal ideal ring \cite[p.278]{zariski2013commutative}. 
    The structure theorem for modules over principal ideal rings \cite[Theorem 15.33]{brown1993matrices} hence yields a sequence of ideals $\tilde{\mathfrak{a}}_1,\ldots,\tilde{\mathfrak{a}}_n \subseteq R/\det(\bG)R$ with $\tilde{\mathfrak{a}}_{i+1} \subseteq \tilde{\mathfrak{a}}_i$ for every $i$ such that 
    $
        \coker(\bG) \cong \oplus_{i=1}^n (R/\det(\bG)R)/ \tilde{\mathfrak{a}}_i 
    $ 
    as an $(R/\det(\bG)R)$-module and hence as a $R$-module.
    
    Note that the number of ideals $\tilde{\mathfrak{a}}_i$ here equals the matrix dimension $n$.  
    This is possible by the proof of \cite[Theorem 15.33]{brown1993matrices} which constructs the ideals using the Smith normal form of $\bG$; the latter has exactly $n$ diagonal entries, although some may be units in which case $ (R/\det(\bG)R)/\tilde{\mathfrak{a}}_i =0$. 
    Taking $\mathfrak{a}_i \subseteq R$ to be the unique ideal with $\det(\bG) \in \mathfrak{a}_i$ that reduces to $\tilde{\mathfrak{a}}_i$ modulo $\det(\bG)$ now proves the first property.

    We next prove that $\prod_{k\leq i}\mathfrak{a}_k = \mathfrak{D}_i$ by computing an invariant of the module $\coker(\bG)$ in two ways. 
    The \emph{$i$th Fitting ideal $F_i(\cdot)$} of a module is defined as the $(n-i)$th determinantal ideal of the matrix associated to an arbitrary finite free presentation \cite[Chapter XIX, \S2]{lang2012algebra}. 
    Using the presentation $ R^n \to_{\bG} R^n \to \operatorname{coker}(\bG) \to 0$ hence yields that $
        F_{n-i}\bigl( \coker(\bG)\bigr) \cong \mathfrak{D}_i$. 
    On the other hand, Fitting ideals are an invariant \cite[Chapter XIX, Lemma 2.3]{lang2012algebra}, so  $F_{n-i}\bigl( \coker(\bG)\bigr) = F_{n-i}(\oplus_{k=1}^n  R/\mathfrak{a}_k)$.
    It here holds by \cite[Chapter XIX, Proposition 2.8] {lang2012algebra} that 
    \begin{equation}
        F_{n-i}\Bigl(\oplus_{i=1}^n \frac{R}{\mathfrak{a}_i}\Bigr) = \sum_{\substack{j_1,\ldots,j_n \geq 0\\ \sum_{k=1}^n j_k = n-i}} \prod_{k=1}^n F_{j_k}\Bigl(\frac{R}{\mathfrak{a}_k} \Bigr).\label{eq:TinyInk} 
    \end{equation}
    Further, it holds that $F_0(R/\mathfrak{a}_k) = \mathfrak{a}_k$ by \cite[Chapter XIX, Corollary 2.6]{lang2012algebra} and $F_j(R/\mathfrak{a}_k) = R$ for $j\geq 1$ by \cite[Chapter XIX, Proposition 2.4.(ii)]{lang2012algebra}.    
    Considering the set of indices $k$ with $j_k=0$ in the preceding now yields that
    \begin{equation}
        \mathfrak{D}_i = \sum_{\substack{\cK \subseteq \{1,\ldots,n \}\\ \#\cK \geq i}} \prod_{k\in \cK} F_0\Bigl(\frac{R}{\mathfrak{a}_k} \Bigr) =  \sum_{\substack{\cK \subseteq \{1,\ldots,n \}\\ \#\cK \geq  i}} \prod_{k\in \cK} \mathfrak{a}_k = \prod_{k\leq i} \mathfrak{a}_k. \label{eq:PinkYawn}  
    \end{equation}
    Here, the final equality used that $\mathfrak{a}_{k+1}\subseteq \mathfrak{a}_k$ for all $k$.

    In particular, \eqref{eq:PinkYawn} yields that $\mathfrak{a}_1 = \mathfrak{D}_1$ and that $\mathfrak{a}_i\mathfrak{D}_{i-1} = \mathfrak{D}_i$ for $i\geq 1$. 
    Recall from Definition \ref{def: DeterminantalIdeal} that these properties uniquely define the Smith ideals since $\mathfrak{D}_n \neq 0$ by the assumption that $\det(\bG)\neq 0$ and hence $\mathfrak{D}_i \neq 0$ for all $i$.  
    Hence, $\mathfrak{a}_i = \mathfrak{d}_i$ for all $i$, as desired. 
    \end{proof}
\end{document}